\documentclass[11pt,a4paper]{article}
\usepackage{amssymb,amsmath,amsfonts,mathrsfs,bm,enumerate,comment}
\allowdisplaybreaks[1]
\numberwithin{equation}{section}

\usepackage[colorlinks=true, pdfstartview=FitV, linkcolor=blue, citecolor=blue, urlcolor=blue,pagebackref=false]{hyperref}

\usepackage{tikz}
\usepackage{float}
\usepackage[font=footnotesize]{caption}

\parskip 2pt

\setlength{\topmargin}{-0.50cm}
\setlength{\oddsidemargin}{1.05cm}
\textwidth=140mm
\textheight=230mm

\usepackage{times,theorem,latexsym,color,comment}

\newcommand{\BOX}{\ensuremath\Box}

\newtheorem{theorem}{Theorem }[section]

{\theorembodyfont{\rmfamily}}
{\theorembodyfont{\rmfamily}}
{\theorembodyfont{\rmfamily}}
\newtheorem{lemma}[theorem]{Lemma}
\newtheorem{proposition}[theorem]{Proposition}
{\theorembodyfont{\rmfamily}\newtheorem{remark}[theorem]{Remark}}
{\theorembodyfont{\rmfamily}}

\newcommand{\R}{\mathbb{R}}
\newcommand{\C}{\mathbb{C}}

\newcommand{\dd}{\,{\rm d}}

\newcommand{\opdiv}{\operatorname{div}}

\newcommand{\oparg}{\operatorname{arg}}

\newcommand{\eps}{\varepsilon}
\newcommand{\ii}{\mathrm i}


\newcommand{\overbar}[1]{\mkern 1.5mu\overline{\mkern-1.5mu#1\mkern-1.5mu}\mkern 1.5mu}

\def\XXint#1#2#3{{\setbox0=\hbox{$#1{#2#3}{\int}$}
		\vcenter{\hbox{$#2#3$}}\kern-.5\wd0}}

\newenvironment{proof}{{\vskip\baselineskip\noindent\textbf{Proof:}}}
{\hspace*{.1pt}\hspace*{\fill}\BOX\vskip\baselineskip}

\newenvironment{proofx}[1]
{\vskip\baselineskip\noindent\textbf{Proof of {#1}:}}
{\hspace*{.1pt}\hspace*{\fill}\BOX\vskip\baselineskip}

\begin{document}

\title{An $L^2$-quantitative global approximation \\
for the Stokes initial-boundary value problem}

\author{
Mitsuo Higaki
\thanks{
Department of Mathematics, 
Graduate School of Science, 
Kobe University, 
1-1 Rokkodai, Nada-ku, Kobe 657-8501, Japan.
\textit{E-mail address:}\texttt{higaki@math.kobe-u.ac.jp}
}
}

\date{}

\maketitle

\begin{abstract}
We establish the first quantitative Runge approximation theorem, with explicit $L^2$-estimates, for the 3d nonstationary Stokes system on a bounded spatial domain. This result addresses the two primary limitations of the qualitative result \cite{HigSue2025} obtained in collaboration with Franck Sueur: first, it bypasses the non-constructive Hahn-Banach theorem used in \cite{HigSue2025}, precluding quantitative estimates; and second, it extends the scope of the theory from interior approximations to the physically important initial-boundary value problem. Our proof is founded on the modern quantitative framework of R\"{u}land-Salo \cite{RulSal2019}, which we adapt to the Stokes system by combining semigroup theory with a quantitative approximation for the associated resolvent problem. 
\end{abstract}

\tableofcontents

    \section{Introduction}

The global approximation (or Runge-type) theorem is a foundational concept in partial differential equation (PDE) theory. It typically states that any local solution of an equation in a compact set $K$ can, under topological connectedness of the complement of $K$, be approximated by a global solution defined in the full space. This field has historically evolved in two distinct directions. 
The first is the qualitative direction, focused on proving the existence of such approximations. This theory, originating from Runge \cite{Run1885} and Mergelyan \cite{Mer1952} in complex analysis of one variable, is extended to elliptic PDEs by Lax \cite{Lax1956}, Malgrange \cite{Mal1955}, and Browder \cite{Bro1962}, and later generalized to non-elliptic equations such as the parabolic equations by Jones \cite{Jon1975} and Enciso--Garc\'{\i}a-Ferrero--Peralta-Salas \cite{EGFPS2019}; see also Dipierro-Savin-Valdinoci \cite{DSV2019} treating the fractional heat equations and Donaldson \cite{Don1993} the instanton equations. Furthermore, the Runge approximation property continues to play a crucial role in inverse problems, such as the Calder\'{o}n problem \cite{LLS2020}.

The second, more recent direction is the quantitative problem, which seeks to derive explicit estimates linking the approximation error to the given data; this is often called determining the cost of approximation \cite{DebKal2025}. This modern methodology was initiated by R\"{u}land-Salo \cite{RulSal2019}, updating the classical works \cite{Lax1956,Mal1955} in a quantitative manner. Indeed, quantitative approximation results have been established for various other operators, including non-elliptic ones by \cite{RulSal2020a,RulSal2020b,DebKal2025}, notably for the Schr\"{o}dinger operator by Enciso--Peralta-Salas \cite{EncPer2021}. The proof in this paper also builds upon these ideas of \cite{RulSal2019}.

The global approximation theorem for the 3d Stokes system is proved in \cite{HigSue2025}. The main result \cite[Theorem A]{HigSue2025} states that the velocity part of any local solution $(v,q)$ of the Stokes system in $K \subset \R^{4}_{+}$, a compact set satisfying the usual conditions, can be approximated by the velocity part of a solution $(u,p)$ of the Stokes system in $\R^{4}_{+}$. In general, these global solutions are unbounded in space \cite[Theorem B]{HigSue2025}. This behavior stands in sharp contrast to the heat equation \cite[Theorem 1.2]{EGFPS2019}, where approximations with decay at infinity are possible. These qualitative approximation theorems highlight a fundamental property of the underlying linear equations: \emph{rigidity}. This property, often manifested as a unique continuation principle, dictates that a local solution can, at best, only be approximated by a global one, excluding the possibility of a perfect match (i.e., $\eps=0$) unless the local solution happens to be the restriction of a global one. A very different scenario arises with the concept of \emph{flexibility}, a property observed in certain nonlinear systems, such as the 3d Euler system. For these ``flexible" equations, an entirely different theory applies in the context of weak solutions; see e.g. \cite{DeLelSze2009,LewPak2017} applying convex integration. Surprisingly, this framework allows for the construction of global in space-time weak solutions that exactly coincide with a given local smooth solution; see the recent remarkable progress \cite{EPTPS2025} for the 3d Euler system. The reader is also referred to \cite{DSV2019} for a ``flexible" Runge-type approximation to linear nonlocal equations and \cite{RulSal2020b} for the cost of approximation.

The results in \cite{HigSue2025} are inherently non-quantitative, meaning that the global approximations cannot be estimated by the given local data. This limitation is a direct consequence of the proof strategy employed. That proof consisted of two main stages: first, constructing an intermediate approximation $v_1$, under forcing with poles far from $K$, using the ``sweeping of poles and discretization" method \cite{Bro1962,EGFPS2019}; and second, extending $v_1|_{K}$ to a global solution $u$ using an ordinary differential equation (ODE) technique. The bottleneck, however, lies in the first stage. The ``sweeping" method, while effective for proving existence, relies fundamentally on a non-constructive Hahn-Banach theorem. It is this reliance that makes it impossible to derive quantitative estimates.

This situation mirrors the classic dichotomy observed by Bourgain-Brezis \cite{BouBre2003,BouBre2007} in the context of the divergence equation and Hodge systems: while duality arguments (Soft Analysis) provide existence of solutions in critical spaces, establishing explicit quantitative bounds requires a fundamentally different, constructive approach (Hard Analysis).

In the specific context of the Runge approximation, this transition from qualitative to quantitative is not merely a technical refinement but a crucial step for applications. As emphasized in recent works on the cost of approximation \cite{RulSal2019, DebKal2025}, explicit bounds on the global approximations are indispensable for applications to inverse problems and control theory, where the magnitude of the inputs directly determines the feasibility of the process.

Furthermore, the results in \cite{HigSue2025} are restricted to interior approximations, meaning that they do not cover the practically important case of initial-boundary value problems on a domain $D\subset\R^3$. This is a significant limitation, as many applications of the Stokes system involve fluid motion constrained by solid boundaries. While the interior framework of \cite{HigSue2025} is general enough to encompass pathological solutions, so-called parasitic solutions or Serrin's examples \cite{Ser1962}, it leaves open the problem of global approximation for solutions defined on a bounded spatial domain with prescribed boundary conditions.

This paper addresses these two limitations. We shift the focus from interior approximations of \cite{HigSue2025} to more physically relevant initial-boundary value problems on a bounded domain $D$. Our primary objective is to establish the first quantitative approximation theorems in this setting, specifically by deriving explicit $L^2$-estimates that link the global approximations directly to the initial data; see Theorems \ref{thm.main1} and \ref{thm.main2} below.

Let $D \subset \R^3$ be a bounded domain with smooth boundary. Set 
\[
    \begin{split}
    C^\infty_{0,\sigma}(D)
    &= \{\varphi\in C^\infty_0(D)^3~|~\opdiv\varphi=0\}, \\
    L^2_\sigma(D) 
    &= \big(\text{Closure of $C^\infty_{0,\sigma}(D)$ in $L^2(D)^3$}\big). 
    \end{split}
\]
Consider the 3d nonstationary Stokes system on $D$ under the no-slip condition: 
\begin{equation}\tag{S}\label{intro.eq.S}
    \left\{
    \begin{array}{ll}
    \partial_{t} v - \Delta v + \nabla q
    = 0&\mbox{in}\ D\times (0,\infty), \\
    \opdiv v
    = 0&\mbox{in}\ D\times [0,\infty), \\
    v
    = 0&\mbox{on}\ \partial D\times (0,\infty), \\
    v
    = v_0\in L^2_\sigma(D) &\mbox{on}\ D\times \{0\}.
    \end{array}\right.
\end{equation}
The solution of \eqref{intro.eq.S} can be represented by the Stokes semigroup, which we now recall. Let ${\mathbb P}: L^2(D)^3 \to L^2_\sigma(D)$ denote the orthogonal projection, called the Helmholtz projection, satisfying ${\mathbb P} \nabla p=0$ for $p\in H^1(D)$. Then we define 
\[
    {\mathbb A} = -{\mathbb P} \Delta,
    \qquad
    D({\mathbb A}) = L^{2}_{\sigma}(D) \cap H^{1}_0(D)^3 \cap H^{2}(D)^3. 
\]
The operator ${\mathbb A}$ is called the Stokes operator; see \cite[Section 2.1, I\hspace{-1.2pt}I\hspace{-1.2pt}I\hspace{-1.2pt}]{Soh2001book}. It is well known (see e.g. \cite[Chapter 2]{Lad1969book}) that ${\mathbb A}$ is nonnegative and self-adjoint in $L^{2}_{\sigma}(D)$ and that $-{\mathbb A}$ generates the $C_0$-analytic semigroup $\{e^{-t {\mathbb A}}\}_{t\ge0}$, called the Stokes semigroup. Moreover, ${\mathbb A}^{-1}$ is completely continuous, and the spectrum $\sigma({\mathbb A})$ is discrete. The elements $\lambda_1 \le \lambda_2 \le \cdots$ are positive and converge to $\infty$, and each element has finite multiplicity.

Our global approximation theorems for the Stokes system \eqref{intro.eq.S} are stated as follows.

\begin{theorem}\label{thm.main1}
Let $D \subset \R^3$ be a bounded domain whose complement $\R^3\setminus D$ is connected. Then, there exist positive constants $C,\nu$ for which the following statement holds: let $v_0\in D({\mathbb A})$ and set $v(t) = e^{-t {\mathbb A}} v_0$. For any $\eps>0$, there exists a smooth solution $(u,p)$ of 
\[
    \left\{
    \begin{array}{ll}
    \partial_{t} u - \Delta u + \nabla p
    = 0&\mbox{in}\ \R^3\times (0,\infty),\\
    \opdiv u
    = 0&\mbox{in}\ \R^3\times [0,\infty) 
    \end{array}\right.
\]
such that 
\begin{equation}\label{approx.thm.main1}
    \|v(t) - u(t)\|_{L^2(D)} 
    \le \eps \exp(t) 
    \|{\mathbb A} v_0\|_{L^2(D)},
    \quad
    t \ge 0, 
\end{equation}
and that
\begin{equation}\label{est.thm.main1}
\begin{split}
    |u(x,t)|
    &\le 
    \exp\big(\exp(C\eps^{-\nu})\big)
    \|{\mathbb A} v_0\|_{L^2(D)} \\
    &\quad
    \times
    \max\big\{
    \langle x \rangle^{\exp (C\eps^{-\nu})}, \,
    \exp(C\eps^{-1/2} |x|) 
    \big\}
    \exp(t), 
    \quad
    (x,t) \in \R^4_{+}. 
\end{split}
\end{equation}
\end{theorem}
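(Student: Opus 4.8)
The plan is to reduce the parabolic approximation problem to a family of resolvent (stationary) problems via the Laplace/spectral representation of the Stokes semigroup, approximate each resolvent solution quantitatively by a global solution using a Runge-type argument à la Rüland–Salo, and then resum. First I would represent $v(t)=e^{-t\mathbb{A}}v_0$ either through the spectral decomposition $v(t)=\sum_j e^{-\lambda_j t}(v_0,\phi_j)\phi_j$ or, more flexibly for the quantitative estimates, through the Dunford integral $e^{-t\mathbb{A}}=\frac{1}{2\pi \ii}\int_\Gamma e^{-t z}(z+\mathbb{A})^{-1}\dd z$ over a suitable contour $\Gamma$ enclosing $\sigma(\mathbb{A})\subset(0,\infty)$. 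In either case the building blocks are the resolvent solutions: for $z\notin\sigma(\mathbb{A})$, the pair $(w,r)$ solving the stationary Stokes resolvent problem $z w-\Delta w+\nabla r=f$, $\opdiv w=0$ in $D$, $w=0$ on $\partial D$. The key input I would establish (or cite from the adaptation of \cite{RulSal2019} advertised in the abstract) is a \emph{quantitative Runge approximation for the resolvent problem}: given $\delta>0$, there is a global solution $(W,R)$ of $zW-\Delta W+\nabla R=0$ in $\R^3$ with $\|w-W\|_{L^2(D)}\le \delta\|f\|_{L^2(D)}$ and with explicit control of $\|W\|$ on balls $B_\rho$, the growth in $\rho$ being of the stretched-exponential/polynomial type that appears in \eqref{est.thm.main1}. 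This is obtained by duality: orthogonality to global solutions forces, via unique continuation for the stationary Stokes system across the connected complement $\R^3\setminus D$, that the relevant functional vanishes; the quantitative version replaces Hahn–Banach by a propagation-of-smallness / three-balls inequality, yielding the double-exponential constant $\exp(\exp(C\eps^{-\nu}))$.

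Next I would assemble the parabolic approximant. Using analyticity of the semigroup, write $v(t)=e^{-t\mathbb{A}}v_0 = e^{-\mathbb{A}}\big(e^{-(t-1)\mathbb{A}}v_0\big)$ for $t\ge 1$ and treat small $t$ separately, or — cleaner — use the smoothing bound $\|\mathbb{A}e^{-t\mathbb{A}}v_0\|_{L^2}\le e^{-\lambda_1 t}\|\mathbb{A}v_0\|_{L^2}$ so that the resolvent data $f$ entering each slice is controlled by $\|\mathbb{A}v_0\|_{L^2(D)}$, which is exactly the norm on the right-hand side of \eqref{approx.thm.main1}–\eqref{est.thm.main1}. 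Discretize the contour $\Gamma$ (or truncate the spectral sum at $\lambda_N$ with $N$ chosen in terms of $\eps$), apply the resolvent approximation with error budget $\delta=\delta(\eps,N)$ to each node, and sum: the time factor $e^{t}$ in both estimates comes from bounding $\int_\Gamma |e^{-tz}|\,|\dd z|$ (equivalently $\sum_j e^{-\lambda_j t}\le$ something times $e^{t}$ after the truncation) — one pushes the contour so that $\Re z\ge -1$, producing $e^{t}$ rather than decay. The spatial bound \eqref{est.thm.main1} then follows by taking $\max$ over the finitely many global resolvent solutions used, whose individual growth rates $\langle x\rangle^{\exp(C\eps^{-\nu})}$ and $\exp(C\eps^{-1/2}|x|)$ come respectively from the polynomial (low-frequency) and Bessel-type oscillatory/exponential (frequency $\sim\sqrt{|z|}$, hence $\eps^{-1/2}$) behaviour of global Stokes resolvent solutions; the prefactor is the product of the double-exponential approximation constant and $\|\mathbb{A}v_0\|_{L^2(D)}$.

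The main obstacle, and the technical heart of the paper, is the quantitative resolvent approximation with \emph{$z$-explicit} constants, uniform enough in $z$ along the contour to survive resummation. Qualitatively the Runge property for the stationary Stokes operator is classical (unique continuation holds since solutions are real-analytic in the interior and the complement is connected), but the quantitative framework of \cite{RulSal2019} must be carried out for a system, not a scalar equation, and with the spectral parameter $z$ tracked through every estimate — the three-balls/Carleman constants, the stability of unique continuation from $\R^3\setminus D$, and the growth of the constructed global solution all depend on $|z|$, and one must verify these dependencies are at worst polynomial (or mild exponential in $\sqrt{|z|}$, as reflected in \eqref{est.thm.main1}) so that, after multiplying by $e^{-tz}$ and integrating/summing, the bound closes with the clean factors $e^t\|\mathbb{A}v_0\|_{L^2(D)}$ and the stated double exponential in $\eps$. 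A secondary difficulty is handling the pressure: one works with the divergence-free velocity throughout and recovers $p$ (resp. $R$) a posteriori, but one must ensure the projection $\mathbb{P}$ and the passage $D\leadsto\R^3$ interact correctly, e.g. that the global solution's pressure does not spoil the $L^2(D)$ estimate — this is where the hypothesis $v_0\in D(\mathbb{A})$ (so that $\mathbb{A}v_0\in L^2_\sigma(D)$ is a legitimate forcing) is essential.
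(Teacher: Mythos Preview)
Your overall architecture is exactly the paper's: represent $v(t)$ by the Dunford integral, approximate at the resolvent level via a quantitative Runge argument of R\"uland--Salo type, and integrate back; the sources of the double exponential, the $\exp(C\eps^{-1/2}|x|)$ from the Bessel/Helmholtz behaviour at frequency $\sqrt{|\lambda|}$, and the $e^t$ from the bounded portion of the contour are all correctly identified.

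There is, however, one genuine gap in your resolvent step. You state the key lemma as: for the local solution $w=(\lambda+\mathbb{A})^{-1}v_0$ of the \emph{inhomogeneous} problem $\lambda w-\Delta w+\nabla r=v_0$ in $D$, there exists a global \emph{homogeneous} solution $W$ with $\|w-W\|_{L^2(D)}\le\delta\|v_0\|_{L^2(D)}$. This is false as stated: restrictions to $D$ of global homogeneous Stokes resolvent solutions are dense only in the space of \emph{local homogeneous} solutions, not in all of $L^2(D)^3$, so an inhomogeneous solution cannot in general be approximated this way. The paper's Proposition~\ref{prop.global.approx} is precisely a homogeneous-by-homogeneous statement.

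The missing idea that bridges this is to first subtract the global inhomogeneous resolvent: set $V(\lambda)=(\lambda+\mathbb{A})^{-1}v_0-(\lambda+\mathbb{A}_{\R^3})^{-1}(\mathtt{e}_D v_0)$. The forcing $v_0$ cancels, so $V$ solves the \emph{homogeneous} resolvent problem in $D$, and the quantitative Runge result now applies to give a global homogeneous approximant $U(\lambda)$. The subtracted piece integrates back to $u_1(t)=e^{-t\mathbb{A}_{\R^3}}v_0$, which is itself a global solution of the homogeneous evolution equation (with initial data $v_0$), so the final approximant is $u(t)=u_1(t)+\frac{1}{2\pi\ii}\int_{\gamma\cap\{|\lambda|\le L\}}e^{t\lambda}U(\lambda)\dd\lambda$. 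The tail $|\lambda|>L$ is handled not by discretisation but by the resolvent identity $(\lambda+\mathbb{A})^{-1}v_0-\lambda^{-1}v_0=\lambda^{-1}(\lambda+\mathbb{A})^{-1}\mathbb{A}v_0$, which produces the extra factor $|\lambda|^{-1}$ needed for integrability and is the precise mechanism by which $\|\mathbb{A}v_0\|_{L^2(D)}$ (rather than $\|v_0\|_{L^2(D)}$) enters the final bound. Once this subtraction is in place, the rest of your outline goes through essentially as written.
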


\begin{theorem}\label{thm.main2}
Let $D$ be given as in Theorem \ref{thm.main1}. Then, for any $0<T<\infty$, there exist positive constants $C,\nu$ for which the following statement holds: let $v_0\in D({\mathbb A})$ and set $v(t) = e^{-t {\mathbb A}} v_0$. For any $\eps>0$, there exists a smooth solution $(u_{{\rm S}}, p_{{\rm S}})$ of 
\[
    \left\{
    \begin{array}{ll}
    \partial_{t} u_{{\rm S}} - \Delta u_{{\rm S}} + \nabla p_{{\rm S}}
    = 0&\mbox{in}\ \R^3\times (0,\infty),\\
    \opdiv u_{{\rm S}}
    = 0&\mbox{in}\ \R^3\times [0,\infty) 
    \end{array}\right.
\]
and initial data $u_0\in C^\infty_0(\R^3)$ such that, if one sets $u_{{\rm H}}(t) := e^{t\Delta} u_0$, 
\begin{equation}\label{approx.thm.main2}
    \|v(t) - u_{{\rm S}}(t) - u_{{\rm H}}(t)\|_{L^2(D)}
    \le
    \eps
    \|{\mathbb A} v_0\|_{L^2(D)},
    \quad
    0 \le t \le T, 
\end{equation}
and that 
\begin{align}
    |u_{{\rm S}}(x,t)|
    &\le 
    \exp\big(\exp(C\eps^{-\nu})\big)
    \|{\mathbb A} v_0\|_{L^2(D)} 
    \langle x \rangle^{\exp (C\eps^{-\nu})}
    \exp(t), 
    \quad
    (x,t) \in \R^4_{+}, \label{est.u1.thm.main2} \\
    |u_0(x)|
    &\le 
    \exp\big(C\eps^{-\nu}\exp(C\eps^{-\nu})\big)
    \|v_0\|_{L^2(D)}, 
    \quad
    x \in \R^3. \label{est.u0.thm.main2}     
\end{align}
\end{theorem}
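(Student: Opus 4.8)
\section*{Proof proposal for Theorem \ref{thm.main2}}

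The plan is to reduce the time-dependent approximation to a family of resolvent (elliptic) approximations and then to recombine them via the semigroup. First I would use the spectral resolution of the self-adjoint Stokes operator ${\mathbb A}$ to decompose $v_0$ and $v(t) = e^{-t{\mathbb A}}v_0$: writing $P_\lambda$ for the spectral projections and truncating at a level $\Lambda = \Lambda(\eps)$ to be chosen, one has $\|v(t) - e^{-t{\mathbb A}}P_{\le\Lambda}v_0\|_{L^2(D)} \le \Lambda^{-1} \|{\mathbb A}v_0\|_{L^2(D)}$ uniformly in $t\ge0$, so it suffices to approximate the smooth, finite-energy field $v^\Lambda(t) := e^{-t{\mathbb A}}P_{\le\Lambda}v_0$, which solves the Stokes system in $D$ with an analytic (in fact entire, of exponential type $\le\Lambda$) time dependence. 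The exponential-of-exponential constants in \eqref{est.u1.thm.main2}--\eqref{est.u0.thm.main2} strongly suggest that the correct intermediate object is a \emph{Laplace transform / resolvent} representation: for $\operatorname{Re}z$ large, $(\,z + {\mathbb A})^{-1}v_0 = \int_0^\infty e^{-zt} v(t)\dd t$ solves the stationary Stokes resolvent problem $z w - \Delta w + \nabla\pi = v_0$, $\opdiv w = 0$ in $D$, $w=0$ on $\partial D$, and the quantitative Runge theorem for \emph{this elliptic system} — the resolvent problem with frequency parameter $z$ — is the engine supplied by the R\"{u}land--Salo framework \cite{RulSal2019} adapted to Stokes. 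So the second step is to invoke (or establish, following \cite{RulSal2019,HigSue2025}) a quantitative approximation for the resolvent Stokes problem on $D$: for each relevant $z$, there is a global Stokes-resolvent solution $W_z$ on $\R^3$ with $\|(\,z+{\mathbb A})^{-1}v_0 - W_z\|_{L^2(D)}$ small and with explicit growth bounds on $W_z$ in $|x|$ and in the spectral parameter, the double-exponential arising from propagating a Carleman/three-balls estimate across the connected set $\R^3\setminus D$ and then summing a spherical-harmonic (or Runge-pole) expansion.

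The third step is to reassemble: integrating the family $W_z$ against a suitable contour or Bromwich-type kernel (or, more robustly, discretizing the spectral decomposition of $v^\Lambda$ into finitely many eigenmodes and applying the resolvent approximation at $z=0$, i.e. to $\lambda_j^{-1}$-weighted eigenfunctions, mode by mode) produces a global spatial field whose time evolution under $e^{t\Delta}$ on $\R^3$ — split into the Stokes part $u_{\rm S}$ (coming from the pressure-coupled, divergence-free reassembly) and a purely heat part $u_{\rm H} = e^{t\Delta}u_0$ absorbing the ``defect'' that the global field is generically not itself the initial value of a global Stokes flow. Concretely I expect: approximate each eigenfunction-built block $e^{-\lambda_j t}\phi_j$ of $v^\Lambda$ on $D$ by $e^{-\lambda_j t}\Phi_j + (\text{heat correction})$, where $\Phi_j$ is a global Stokes-resolvent approximant; set $u_{\rm S}(t) = \sum_j c_j e^{-\lambda_j t}\Phi_j$ (still an exact global Stokes solution since each summand is, by linearity) and let $u_0 := \mathbb{P}_{\R^3}$-free data chosen so that $u_{\rm H}(t) = e^{t\Delta}u_0$ cancels the residual; finiteness of the sum keeps $u_0 \in C^\infty_0(\R^3)$ and the growth estimates \eqref{est.u1.thm.main2}--\eqref{est.u0.thm.main2} follow by summing the per-mode bounds, with $\langle x\rangle^{\exp(C\eps^{-\nu})}$ dominating because only polynomially-growing (harmonic-type) global resolvent solutions are needed once the parameter $z$ is bounded. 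The restriction to $0\le t\le T$ in \eqref{approx.thm.main2}, versus the $e^t$-weighted all-time bound of Theorem \ref{thm.main1}, is exactly what allows the compactly supported $u_0$: the heat correction only needs to be controlled on a finite window, so its spatial tail can be cut off at the cost of an error $\le\eps/2$ on $[0,T]$.

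The main obstacle I anticipate is the \emph{uniform-in-$z$} (or uniform-in-$\lambda_j$) control in the resolvent approximation step: the R\"{u}land--Salo quantitative scheme produces, for a single elliptic operator, a global approximant with constants depending on the operator's coefficients and on the geometry, but here the ``operator'' $z + {\mathbb A}$ carries a parameter that must be tracked explicitly through the Carleman estimates, the spectral/Runge expansion, and the duality-free construction of the approximant, and it is precisely the interplay between $z$ (equivalently, the time-frequency $\Lambda$) and the spatial growth that generates the nested exponentials. A secondary technical point is handling the pressure and the divergence constraint globally — ensuring that the reassembled $u_{\rm S}$ is genuinely divergence-free on all of $\R^3$ with an associated $p_{\rm S}$, which I would manage by working throughout with the Helmholtz/Leray projection and with stream-function or Biot--Savart-type representations so that divergence-freeness is built in rather than imposed afterward. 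The splitting into $u_{\rm S} + u_{\rm H}$ is the conceptual device that makes the compact support of $u_0$ compatible with the (generically unbounded) Stokes approximant: one should not try to make $u_{\rm S}$ itself arise from compactly supported Stokes data.
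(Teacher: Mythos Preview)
Your high-level architecture---reduce to a quantitative resolvent approximation and reassemble via a semigroup representation---is the same as the paper's, and you correctly flag the uniform-in-parameter control as the crux. But the concrete route you sketch diverges from the paper in several places, and two of those divergences are genuine gaps.

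First, the paper does \emph{not} use the eigenfunction expansion. It uses the Dunford contour integral
\[
v(t)=\frac{1}{2\pi\ii}\int_\gamma e^{t\lambda}(\lambda+{\mathbb A})^{-1}v_0\dd\lambda,
\]
truncated at $|\lambda|\le L=\eps^{-1}$ (the tail is handled by the resolvent identity). Your eigenfunction route has a real obstruction: each $\phi_j$ satisfies $-\lambda_j\phi_j-\Delta\phi_j+\nabla q_j=0$ in $D$, so the effective resolvent parameter is $-\lambda_j<0$, which lies on the negative real axis and is \emph{outside} the sector $\Sigma_{\pi-\delta}$ where the paper's resolvent approximation (Proposition~\ref{prop.global.approx}) is proved. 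The fundamental solution $E_\lambda$ then becomes oscillatory rather than decaying, and the entire machinery of Section~\ref{sec.resolvent} (logarithmic stability, estimates on $I_\nu, K_\nu$) would have to be redone. The Dunford contour keeps $\lambda$ safely in the sector.

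Second, the paper does not approximate $(\lambda+{\mathbb A})^{-1}v_0$ directly. It first subtracts the global resolvent, setting $V(\lambda)=(\lambda+{\mathbb A})^{-1}v_0-(\lambda+{\mathbb A}_{\R^3})^{-1}v_0$, which satisfies the \emph{homogeneous} resolvent system in $D$; only then is Proposition~\ref{prop.global.approx} applied. This subtraction is what makes the resolvent approximation applicable and is missing from your outline. Third, the Stokes/heat split $u_{\rm S}+u_{\rm H}$ is not an ad~hoc ``defect absorption'': it is the structural decomposition $U=U_1+U_2$ of Proposition~\ref{prop.global.approx.step3}, coming from the vector-spherical-harmonic expansion, where $U_1$ (Stokes resolvent, polynomial growth) and $U_2$ (heat resolvent, exponential growth) are built in from the start. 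Integrating $U_1$ over the contour gives $u_{2,1}$, and $u_{\rm S}:=u_1+u_{2,1}$; integrating $U_2$ gives $u_{2,2}$, a genuine heat solution on $\R^3$ with exponentially growing initial trace $f=u_{2,2}(\cdot,0)$.

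Finally, your claim that ``finiteness of the sum keeps $u_0\in C^\infty_0(\R^3)$'' is incorrect: none of the summands are compactly supported. The paper obtains $u_0\in C^\infty_0$ by setting $u_0=\chi f$ with a cutoff $\chi$ supported in $B_M$, and then estimating $|u_{2,2}(x,t)-e^{t\Delta}(\chi f)(x)|$ on $D\times[0,T]$ by playing Gaussian decay of $G_t$ against the exponential growth $e^{C\eps^{-1/2}|y|}$ of $f$; choosing $M$ large (depending on $\eps,T$) makes this error $\le\eps\|v_0\|_{L^2(D)}$. This cutoff step is exactly why the approximation \eqref{approx.thm.main2} is restricted to $[0,T]$.
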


\begin{remark}\label{rem.thm.main12}
\begin{enumerate}[(i)]
\item
One of the key motivations for this work was to improve the spatial behavior of the global approximations found in the previous qualitative work \cite{HigSue2025}. Indeed, the global approximations in \cite{HigSue2025}, constructed via a non-constructive Hahn-Banach theorem, exhibit exponential growth in space. It is worth noting that this spatial exponential growth in \cite{HigSue2025} could have been mitigated. In fact, the proof strategy in this paper, notably the decomposition of the resolvent solution into a Stokes part $u_1$ and a heat part $u_2$ in Proposition \ref{prop.global.approx.step3}, is conceptually separable from the quantitative idea. Had this decomposition been applied within the qualitative framework of \cite{HigSue2025}, it would have yielded a global approximation with only polynomial growth in space, akin to a qualitative counterpart of Theorem \ref{thm.main2}.

The true novelty of the present paper, therefore, lies not just in achieving polynomial spatial growth in Theorem \ref{thm.main2}, but in doing estimation quantitatively both in Theorems \ref{thm.main1} and \ref{thm.main2}. By replacing the abstract Hahn-Banach argument with a constructive proof based on the R\"{u}land-Salo framework and resolvent estimates, we establish explicit pointwise bounds for global approximations. This quantitative approach, however, introduces a trade-off in the temporal behavior. While the global approximations in \cite{HigSue2025} decay as $t\to\infty$ (see \cite[Section 3.6]{HigSue2025}), our current constructive proof via the Dunford integral necessarily results in approximations that grow exponentially in time as $\exp(t)$, as seen in \eqref{est.thm.main1} and \eqref{est.u1.thm.main2}.

\item
Contrary to the global-in-time approximation \eqref{approx.thm.main1} in Theorem \ref{thm.main1}, the approximation \eqref{approx.thm.main2} in Theorem \ref{thm.main2} is restricted to finite time intervals $[0,T]$. This restriction is due to the particular structure of the approximation, which decomposes the global solution into the Stokes part $u_{{\rm S}}$ and the heat semigroup part $u_{{\rm H}}$. Although the heat component $u_{{\rm H}}$, generated by the compactly supported initial data $u_0$, decays as $t\to\infty$, the Stokes component $u_{{\rm S}}$ still exhibits temporal growth, similar to the behavior of the full approximation $u$ in Theorem \ref{thm.main1}. Hence Theorem \ref{thm.main2} isolates a decaying component, but at the cost of restricting the approximation to finite time. This restriction is necessary as our proof does not guarantee the long-time behavior of this decomposition.

\item
The results of this paper are not restricted to the no-slip boundary condition. Our proof relies on two main aspects: the representation of the local solution $v(t)$ via the Dunford integral for the analytic semigroup $\{e^{-t {\mathbb A}}\}_{t\ge0}$ in Section \ref{sec.prf} and the quantitative approximation for resolvent problems in Section \ref{sec.resolvent}. The Dunford integral representation is valid for any sectorial operator $-\mathcal{A}$ whose resolvent set contains a sector with angle greater than $\pi/2$, generating the analytic semigroup $\{e^{-t \mathcal{A}}\}_{t\ge0}$. The core of this work, the quantitative approximation for the resolvent, is based on spectral properties and logarithmic stability in Lemma \ref{lem.est.stab}, which are not exclusive to the no-slip case. Consequently, any set of boundary conditions, such as the Navier-slip condition with favorable sign or periodic boundary conditions, that yields a Stokes operator $\mathcal{A}$ satisfying such abstract properties can be treated by the same quantitative methodology. The structure of the quantitative estimates in Theorems \ref{thm.main1} and \ref{thm.main2} should remain, though the constants $C$ involved in the estimates would depend on the given geometric and operator-theoretic properties of the chosen boundary condition. A detailed analysis of the cases where the Stokes operator $\mathcal{A}$ might possess a non-trivial kernel (e.g., under certain geometric symmetries or different types of slip conditions) is beyond the scope of this paper and will be addressed in future work.

\item
It would be of great interest to extend the $L^2$-quantitative global approximation results in Theorems \ref{thm.main1} and \ref{thm.main2} to an $L^p$-setting \cite{Sol1977,Gig1981,AbeGig2013}. However, such an extension presents significant challenges, as the R\"{u}land-Salo framework utilized in Section \ref{sec.resolvent} depends heavily on the underlying Hilbert space structure. 
\end{enumerate}
\end{remark}

The key to our proof lies in establishing a quantitative approximation $u$ for the homogeneous Stokes resolvent problem; see Proposition \ref{prop.global.approx}. A crucial step in this process, adapting the quantitative Runge approximation framework of R\"{u}land-Salo \cite{RulSal2019}, is the decomposition of the global approximation $u$ into two distinct parts (see Proposition \ref{prop.global.approx.step3}):
\begin{itemize}
\item
A solution $u_1$ to the Stokes resolvent problem, exhibiting polynomial growth in space

\item
A solution $u_2$ to the heat resolvent problem, exhibiting exponential growth in space
\end{itemize}
This decomposition allows us to derive precise, quantitative estimates for the growth of each component in terms of the approximation error $\eps$; see Propositions \ref{prop.global.approx.step4} and \ref{prop.global.approx.step5}, which are essential for estimating the constructed global approximations in Section \ref{sec.prf}.

The rest of this paper is organized as follows. In Section \ref{sec.resolvent}, we provide the global approximation theorem for the homogeneous Stokes resolvent problem. Section \ref{sec.prf} is devoted to the proofs of Theorem \ref{thm.main1} and Theorem \ref{thm.main2}, which are based on the results in Section \ref{sec.resolvent}.

    \paragraph{Notation}

In this paper, we will use the following notation.
\begin{itemize}
\item
We let ${\mathbb P}_{\R^3}$ denote the Helmholtz projection in $\R^3$ and ${\mathbb A}_{\R^3}$ the Stokes operator in $\R^3$.

\item
We let $B_R$ denote a ball in $\R^3$ centered at the origin with radius $R>0$.

\item
We let $(r,\theta,\phi)$ denote the spherical coordinates on $\R^{3}$
\[
    x
    =
    \left(
    \begin{matrix}
    x^{1}\\
    x^{2}\\
    x^{3}
    \end{matrix}
    \right)
    =
    \left(
    \begin{matrix}
    r\sin \theta \cos \phi\\
    r\sin \theta \sin \phi\\
    r\cos \theta
    \end{matrix}
    \right),
    \quad
    (r,\theta,\phi)
    \in
    (0,\infty)
    \times
    [0,\pi]
    \times
    [0,2\pi), 
\]
and $\bm{\hat r},\bm{\hat \theta},\bm{\hat \phi}$ the unit vectors
\[
    \bm{\hat r}
    =
    \left(
    \begin{matrix}
    \sin \theta \cos \phi\\
    \sin \theta \sin \phi\\
    \cos \theta
    \end{matrix}
    \right),
    \quad
    \bm{\hat \theta}
    =
    \left(
    \begin{matrix}
    \cos \theta \cos \phi\\
    \cos \theta \sin \phi\\
    -\sin \theta
    \end{matrix}
    \right),
    \quad
    \bm{\hat \phi}
    =
    \left(
    \begin{matrix}
    -\sin \phi\\
    \cos \phi\\
    0
    \end{matrix}
    \right).
\]
In addition, for a function $f$ on $\R^{3}$, we slightly abuse the notation to denote 
\[
f(x)
=f(r,\theta,\phi)
=f(r\sin\theta \cos\phi, r\sin\theta \sin\phi, r\cos\theta).
\]

\item
For $\delta\in(0,\pi)$, we set 
$
    \Sigma_\delta 
    = \{z\in\C\setminus\{0\}~|~|\oparg z|<\delta\}. 
$

\item 
For $Y\subset \R^3$ and a function $f$ defined on $Y$, we set 
\[
    \mathtt{e}_Y f 
    = \left\{
    \begin{array}{ll}
    f&\mbox{in}\ Y, \\
    0&\mbox{in}\ \R^3\setminus Y. 
    \end{array}\right. 
\]

\item 
For a domain $Y\subset \R^3$ and $f,g\in L^2(Y)^3$, we set 
$
    \langle f,g \rangle_Y
    = \int_Y f \cdot \overbar{g}. 
$

\item
We employ the following notation, where $C$ represents an implicit constant:
\begin{itemize}
\item
$A\lesssim B$ signifies that $A\le CB$ for some $C>0$.

\item
$A\approx B$ signifies that $C^{-1}A\le B\le CA$ for some $C\ge1$. 
\end{itemize}
\end{itemize}

    \section{Global approximation for the Stokes resolvent problem}
    \label{sec.resolvent}

The goal of this section is to prove the following proposition for global approximations to the local resolvent problem. The proof will be provided at the end of this section.

\begin{proposition}\label{prop.global.approx}
Let $D \subset \R^3$ be a bounded domain whose complement $\R^3\setminus D$ is connected. For $\lambda\in \Sigma_{\pi-\delta} \cap \{|\lambda| \ge 1\}$ with $\delta\in(0,\pi/2)$, let $v\in H^2(D)^3$ and $q\in H^1(D)$ satisfy
\[
    \left\{
    \begin{array}{ll}
    \lambda v - \Delta v + \nabla q
    = 0&\mbox{in}\ D, \\
    \opdiv v
    = 0&\mbox{in}\ D. 
    \end{array}\right.
\]
Then, for any $\eps>0$, there exists $(u,p)$ solving 
\[
    \left\{
    \begin{array}{ll}
    \lambda u - \Delta u + \nabla p
    = 0&\mbox{in}\ \R^3, \\
    \opdiv u
    = 0&\mbox{in}\ \R^3 
    \end{array}\right.
\]
such that $u$ approximates $v$ in $D$ as 
\[
    \|v - u\|_{L^2(D)} 
    \le 
    \eps 
    \Big( \|v\|_{H^{1}(D)} + \|q\|_{H^{1}(D)} \Big). 
\]
Moreover, $u$ can be decomposed into $u = u_1 + u_2$, where $u_1$ is a solution to the Stokes resolvent problem in $\R^3$ with polynomial growth, and $u_2$ is a solution to the heat resolvent problem in $\R^3$ with exponential growth; see Proposition \ref{prop.global.approx.step3}. Their quantitative estimates are provided in Propositions \ref{prop.global.approx.step4} and \ref{prop.global.approx.step5}, respectively.
\end{proposition}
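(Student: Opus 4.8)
The plan is to peel off the pressure and reduce the statement to two quantitative Runge approximations of Laplace/heat-resolvent type. Taking the divergence of $\lambda v-\Delta v+\nabla q=0$ and using $\opdiv v=0$ shows that $q$ is harmonic in $D$. Since $\R^3\setminus D$ is connected, I would first invoke a quantitative Runge approximation for harmonic functions --- a scalar instance of the framework of \cite{RulSal2019}, whose engine is the logarithmic stability/spectral input of Lemma \ref{lem.est.stab} --- to produce, for a parameter $\delta_1>0$ to be fixed later, a harmonic polynomial $P$ on $\R^3$ with $\|q-P\|_{H^1(D)}\le\delta_1\|q\|_{H^1(D)}$, with $\deg P\lesssim\log(1/\delta_1)$ and coefficients of size $\le e^{C\delta_1^{-\nu}}$; the polynomial version follows from the ``approximation by harmonic functions on a large ball $B_R\supset\overline D$'' version by truncating a spherical-harmonic expansion. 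Setting the finite sum
\[
    u_1:=-\sum_{k\ge0}\lambda^{-(k+1)}\,\Delta^k\nabla P,
\]
a polynomial vector field, one checks by telescoping that $\lambda u_1-\Delta u_1=-\nabla P$ in $\R^3$, while $\opdiv u_1=-\sum_{k\ge0}\lambda^{-(k+1)}\Delta^{k+1}P=0$ because $\Delta P=0$. Thus $u_1$ solves the Stokes resolvent problem in $\R^3$ with pressure $P$ and has polynomial growth of degree $\le\deg P-1$, with the quantitative bounds recorded in Proposition \ref{prop.global.approx.step4}.

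Next, $w:=v-u_1$ satisfies $\opdiv w=0$ together with $\lambda w-\Delta w=-\nabla(q-P)=:G$ in $D$, where $\|G\|_{L^2(D)}\lesssim\delta_1\|q\|_{H^1(D)}$; hence $w$ solves the vector heat resolvent equation in the solenoidal class up to the small source $G$. I would then apply a second quantitative Runge approximation, now for $\lambda u_2-\Delta u_2=0$ with $\opdiv u_2=0$, to produce for a parameter $\delta_2>0$ a genuinely global solution $u_2$ on $\R^3$ with
\[
    \|w-u_2\|_{L^2(D)}\le\delta_2\big(\|w\|_{H^1(D)}+\|G\|_{L^2(D)}\big).
\]
The relevant global solutions are spanned by modes built from the modified spherical Bessel functions $i_l(\sqrt\lambda\,r)$, which are entire in $r$, so truncating a suitable expansion of $w$ already yields an exact global solution; the truncation cost is controlled through Lemma \ref{lem.est.stab}, and the resulting $u_2$ grows like $e^{c|\lambda|^{1/2}|x|}$ with amplitude $\le e^{C\delta_2^{-\nu}}$, as recorded in Proposition \ref{prop.global.approx.step5}. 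The source $G$ is absorbed either by running this scheme for the inhomogeneous equation, or by first correcting $w$ with a fixed small global solution of $\lambda w_0-\Delta w_0=\mathtt{e}_D G$ in $\R^3$ obtained from the $\R^3$-resolvent of $-\Delta$ (admissible since $|\lambda|\ge1$ and $\lambda\notin(-\infty,0]$).

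It then remains to set $u:=u_1+u_2$ with pressure $p:=P$ (this is the decomposition of Proposition \ref{prop.global.approx.step3}), so that $\lambda u-\Delta u+\nabla p=(\lambda u_1-\Delta u_1+\nabla P)+(\lambda u_2-\Delta u_2)=0$ and $\opdiv u=0$ in $\R^3$, while $\|v-u\|_{L^2(D)}=\|w-u_2\|_{L^2(D)}\le\delta_2(\|w\|_{H^1(D)}+\|G\|_{L^2(D)})$; since $\|u_1\|_{H^1(D)}\lesssim e^{C\delta_1^{-\nu}}\|q\|_{H^1(D)}$, this is $\lesssim\delta_2\,e^{C\delta_1^{-\nu}}(\|v\|_{H^1(D)}+\|q\|_{H^1(D)})$. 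Choosing $\delta_1$ to be a small fixed multiple of $\eps$ and then $\delta_2:=\eps\,e^{-C\delta_1^{-\nu}}$ makes the right-hand side $\le\eps(\|v\|_{H^1(D)}+\|q\|_{H^1(D)})$, while converting the cost $e^{C\delta_2^{-\nu}}$ of $u_2$ into a bound of the form $\exp(\exp(C\eps^{-\nu}))$, in agreement with the estimates needed later.

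The main obstacle is the second Runge step: producing a genuinely global, divergence-free, exponentially growing heat-resolvent solution approximating $w$ on $D$, with explicit control of both the growth rate and the amplitude and with the source $G$ handled, requires carefully combining the spectral/series truncation in the solenoidal vector setting with the logarithmic stability of Lemma \ref{lem.est.stab}. The delicate point is the bookkeeping of constants --- the radius of the auxiliary ball simultaneously governs the Bessel-function growth and the three-spheres constants, and balancing it against $\delta_2$ is what produces the doubly-exponential dependence on $\eps$ --- together with verifying that the connectedness of $\R^3\setminus D$ is precisely the hypothesis under which the classical sweeping/exhaustion can be run in quantitative form.
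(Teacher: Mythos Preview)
Your strategy differs substantially from the paper's. The paper performs a \emph{single} quantitative Runge approximation for the full Stokes resolvent problem (Proposition~\ref{prop.global.approx.step1}), producing $v_1=\vec{E}_\lambda*{\mathbb P}_{\R^3}(\mathtt{e}_Y F)$ with source $F$ supported in a fixed region $Y$ disjoint from $\overline{D}$; only afterwards is the vector-spherical-harmonic expansion of this approximant truncated (Proposition~\ref{prop.global.approx.step2}) and split algebraically into a Stokes piece $u_1$ and a heat piece $u_2$ (Proposition~\ref{prop.global.approx.step3}). You instead try to peel off the pressure first, via a harmonic Runge for $q$, and then run a second Runge for the solenoidal heat resolvent on the remainder $w=v-u_1$.

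The second step has a genuine gap. Your $w$ is not a solenoidal heat-resolvent solution: since $\lambda w-\Delta w=G=-\nabla(q-P)\neq 0$, it is in fact an exact \emph{Stokes} solution with the small pressure $q-P$, and the R\"{u}land-Salo machinery only approximates elements of the homogeneous solution space. Your proposed fix, correcting by $w_0$ with $(\lambda-\Delta)w_0=\mathtt{e}_D G$ in $\R^3$, does make $w-w_0$ satisfy the homogeneous heat-resolvent equation in $D$, but it destroys the divergence constraint: although $\opdiv G=-\Delta(q-P)=0$ inside $D$, the extension $\mathtt{e}_D G$ has a nontrivial distributional divergence concentrated on $\partial D$, so $\opdiv w_0\not\equiv 0$ and hence $\opdiv(w-w_0)\not\equiv 0$ on $D$. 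Without a divergence-free input you cannot invoke the solenoidal heat-resolvent Runge, and without a divergence-free $u_2$ your final $u=u_1+u_2$ fails $\opdiv u=0$ in $\R^3$. The paper sidesteps this entirely by keeping the pressure coupled through the Stokes fundamental solution in one Runge step. A secondary point: the claim $\deg P\lesssim\log(1/\delta_1)$ is too optimistic---the R\"{u}land-Salo source has norm $\sim\exp(C\delta_1^{-\nu})$, and the truncation degree needed to turn the resulting approximant into a polynomial must absorb this factor, so it is itself exponential in $\delta_1^{-\nu}$ (compare the paper's $l_0$ in \eqref{def.l0}).
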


We begin by stating a few lemmas. Let $\lambda\in \C\setminus\R_{\le0}$. Recall that the function
\[
    E_\lambda(x) = \frac{e^{-\sqrt{\lambda}|x|}}{4\pi|x|} 
    \quad \text{in} \mkern9mu \R^3 \setminus\{0\} 
\]
satisfies the distributional equation $\lambda u - \Delta u = \delta_0$ in $\R^3$. Set 
\[
    \vec{E}_\lambda = (E_\lambda,E_\lambda,E_\lambda). 
\]
For given $f\in L^2(\R^3)^3$, we define 
\[
    u = \vec{E}_\lambda \ast {\mathbb P}_{\R^3} f, 
    \qquad
    \nabla p = ({\rm Id} - {\mathbb P}_{\R^3}) f. 
\]
Then $(u,p)$ solves 
\[
    \left\{
    \begin{array}{ll}
    \lambda u - \Delta u + \nabla p
    = f&\mbox{in}\ \R^3, \\
    \opdiv u
    = 0&\mbox{in}\ \R^3.
    \end{array}\right.
\]

The following lemma is well-known, and thus we state it without proof.

\begin{lemma}\label{lem.est.R3}
Let $D\subset \R^3$ be a bounded domain and $f\in L^2(D)$. Set $w = E_\lambda \ast f$. Then, for any bounded domain $B \subset \R^3$, we have 
\[
    |\lambda| \|w\|_{L^2(B)}
    + |\lambda|^{1/2} \|\nabla w\|_{L^2(B)}
    +  \|\nabla^2 w\|_{L^2(B)}
    \lesssim 
    \|f\|_{L^2(D)}.
\]
The implicit constant is independent of $\lambda$.
\end{lemma}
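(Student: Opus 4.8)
\textbf{Proof plan for Lemma \ref{lem.est.R3}.}

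The plan is to reduce the claimed $\lambda$-uniform estimates for $w = E_\lambda \ast f$ to standard facts about the scalar resolvent equation $\lambda w - \Delta w = f$ in $\R^3$, combined with interior elliptic regularity. Recall that $E_\lambda$ is the fundamental solution, so $w$ satisfies $\lambda w - \Delta w = \mathtt{e}_{\R^3} f =: F \in L^2(\R^3)$ distributionally (here $f$ is extended by zero outside $D$). First I would establish the two global a priori bounds $|\lambda|\,\|w\|_{L^2(\R^3)} \le \|F\|_{L^2(\R^3)}$ and $|\lambda|^{1/2}\,\|\nabla w\|_{L^2(\R^3)} \le \|F\|_{L^2(\R^3)}$, uniformly for $\lambda \in \C\setminus\R_{\le 0}$. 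The cleanest route is the Fourier transform: $\widehat{w}(\xi) = \widehat{F}(\xi)/(\lambda + |\xi|^2)$, so the bounds follow from the elementary pointwise inequalities
\[
    \frac{|\lambda|}{|\lambda + |\xi|^2|} \le C_\delta,
    \qquad
    \frac{|\lambda|^{1/2}\,|\xi|}{|\lambda + |\xi|^2|} \le C_\delta,
\]
valid for $\lambda$ in any sector $\Sigma_{\pi - \delta}$ with $\delta > 0$ — indeed, if $\lambda = \rho e^{i\alpha}$ with $|\alpha| \le \pi - \delta$ then $|\lambda + |\xi|^2|^2 = |\xi|^4 + 2\rho|\xi|^2\cos\alpha + \rho^2 \ge c_\delta(|\xi|^4 + \rho^2)$, and then $|\lambda|/|\lambda+|\xi|^2| \le \rho/(c_\delta^{1/2}(|\xi|^4+\rho^2)^{1/2}) \le c_\delta^{-1/2}$, while for the gradient bound one uses $|\xi|^2\rho \le \tfrac12(|\xi|^4+\rho^2)$. (If one insists on the whole of $\C\setminus\R_{\le 0}$ the same computation works with $\cos\alpha > -1$, but the constant degenerates as $\alpha \to \pm\pi$; since the lemma is applied only with $\lambda$ in a fixed sector, this is harmless.)

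Next I would promote these global $L^2$ bounds to the local second-derivative estimate $\|\nabla^2 w\|_{L^2(B)} \lesssim \|f\|_{L^2(D)}$ for a bounded domain $B$. Here one must be slightly careful because $F = \mathtt{e}_{\R^3} f$ is only $L^2(\R^3)$, so globally $w \in H^2(\R^3)$ already follows from $\Delta w = \lambda w - F \in L^2(\R^3)$ together with the $L^2$ bound on $w$; quantitatively, $\|\nabla^2 w\|_{L^2(\R^3)} \le \|\Delta w\|_{L^2(\R^3)} \le |\lambda|\,\|w\|_{L^2(\R^3)} + \|F\|_{L^2(\R^3)} \le 2\|f\|_{L^2(D)}$ by the first bound and the Calderón–Zygmund identity $\|\nabla^2 w\|_{L^2(\R^3)} = \|\Delta w\|_{L^2(\R^3)}$ on $\R^3$. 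In fact this gives the second-derivative bound on all of $\R^3$, not merely on $B$, with a $\lambda$-independent constant, which is stronger than stated; restricting to $B$ is then trivial. So the full chain is: Fourier-side bounds $\Rightarrow$ $|\lambda|\|w\|_{L^2}$ and $|\lambda|^{1/2}\|\nabla w\|_{L^2}$; then $\Delta w = \lambda w - F$ and Plancherel give $\|\nabla^2 w\|_{L^2}$.

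I do not expect any serious obstacle here — this is a textbook resolvent estimate — but the one point requiring care is the sector-dependence of the constant: the statement as worded claims the implicit constant is "independent of $\lambda$", which is true on a fixed sector $\Sigma_{\pi-\delta}$ (the regime in which Proposition \ref{prop.global.approx} invokes it, via $\lambda \in \Sigma_{\pi-\delta}\cap\{|\lambda|\ge1\}$) but fails uniformly as $\arg\lambda \to \pm\pi$. The honest reading is that the constant depends on a lower bound for $\opdist(\lambda/|\lambda|, -1)$ on the unit circle, i.e. on $\delta$; since the lemma is stated "well-known" and quoted without proof, this implicit sectorial restriction is presumably understood. A secondary minor point is that one should note $E_\lambda \ast f$ is well-defined and the convolution identity $\lambda(E_\lambda\ast f) - \Delta(E_\lambda\ast f) = f$ holds in $\mathcal{D}'(\R^3)$ because $E_\lambda \in L^1_{loc}(\R^3)$ with exponential decay (for $\opname{Re}\sqrt\lambda > 0$) and $f$ has compact support, so no decay assumptions on $w$ are needed to justify the Fourier computation — $w \in L^2(\R^3)$ is automatic.
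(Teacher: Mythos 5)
The paper states Lemma \ref{lem.est.R3} without proof, calling it well-known, and your Fourier-multiplier argument is exactly the standard proof one would supply: the pointwise bounds on $|\lambda|/|\lambda+|\xi|^2|$ and $|\lambda|^{1/2}|\xi|/|\lambda+|\xi|^2|$ over a sector, Plancherel, and $\|\nabla^2 w\|_{L^2(\R^3)}=\|\Delta w\|_{L^2(\R^3)}$ with $\Delta w=\lambda w-\mathtt{e}_D f$. Your caveat is also correct and worth having on record: the constant necessarily depends on the sector aperture $\delta$ and degenerates as $\oparg\lambda\to\pm\pi$, which is harmless here since the lemma is only invoked for $\lambda\in\Sigma_{\pi-\delta}\cap\{|\lambda|\ge1\}$ with $\delta\in(0,\pi/2)$ fixed.
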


A key lemma in the proof of Proposition \ref{prop.global.approx} is the following.

\begin{lemma}[logarithmic stability]\label{lem.est.stab}
Let $(v, q)$ be given as in Proposition \ref{prop.global.approx}. Let $D'$ be a subdomain of $D$ satisfying $\overbar{D'} \subset D$. Suppose that, for $0<\eta<\mathcal{E}$, 
\[
    \|v\|_{H^1(D)} \le \mathcal{E}, 
    \qquad
    \|v\|_{L^2(D')} \le \eta. 
\]
Then, there exist $C>1$ and $\mu>0$ independent of $\lambda, \eta, \mathcal{E}$ such that 
\begin{equation}\label{est.stab}
    \|v\|_{L^2(D)}
    \le 
    C \mathcal{E} 
    \bigg(\log \frac{\mathcal{E}}{\eta} \bigg)^{-\mu}. 
\end{equation}
\end{lemma}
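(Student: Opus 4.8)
The plan is to establish \eqref{est.stab} via a three-balls (or quantitative unique continuation) inequality for the Stokes system, exploiting that $\lambda v - \Delta v = -\nabla q$ and $\opdiv v = 0$, which together make $v$ a solution of an elliptic system with an extra pressure term that we must eliminate. First I would take the divergence of the momentum equation to get $\Delta q = 0$ in $D$, so $q$ is harmonic; then, applying $\opdiv$ is not needed again, but it is useful to note that $w := \Delta v = \lambda v + \nabla q$ satisfies $\Delta w = \lambda \Delta v = \lambda(\lambda v + \nabla q)$, so $v$ solves the fourth-order elliptic equation $\Delta^2 v - \lambda \Delta v = 0$ componentwise (using $\Delta \nabla q = \nabla \Delta q = 0$). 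This removes the pressure entirely and reduces the problem to a quantitative unique continuation statement for a scalar elliptic operator $P(\lambda) = \Delta^2 - \lambda\Delta$ with bounded (indeed constant-in-$x$) coefficients on the fixed domain $D$.

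The core step is then the standard Carleman-estimate-based propagation of smallness. Since $\overline{D'} \subset D$ and $D$ is connected with smooth boundary, I would fix a smooth curve (or a chain of overlapping balls) joining $D'$ to an arbitrary point of $D$, staying in a slightly shrunk subdomain $D'' \Subset D$, and iterate a local three-balls inequality of the form $\|v\|_{L^2(B_{2r})} \le C \|v\|_{L^2(B_r)}^{\alpha}\|v\|_{L^2(B_{4r})}^{1-\alpha}$ with $\alpha \in (0,1)$ depending only on the geometry, obtained from a Carleman inequality for $P(\lambda)$. Crucially, because the zeroth-order coefficient of $P(\lambda)$ is $0$ and the first-order part is $-\lambda\Delta$ (a second-order term with coefficient $\lambda$), one must check that the Carleman constants can be taken \emph{uniformly in $\lambda$} on the relevant region $\lambda \in \Sigma_{\pi-\delta}\cap\{|\lambda|\ge 1\}$. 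This is where I would be careful: the cleanest route is to factor $P(\lambda) = \Delta(\Delta - \lambda)$ and treat $\Delta$ and the shifted Laplacian $\Delta - \lambda$ separately. For $\Delta - \lambda$ with $\mathrm{Re}\,\lambda$ possibly negative (as $|\arg\lambda|$ can approach $\pi$), the relevant Carleman weight still works because $|\lambda| \ge 1$ keeps us away from the degenerate case $\lambda = 0$, and one obtains a three-balls inequality with exponent $\alpha$ independent of $\lambda$ by a convexity/log-convexity argument (e.g., via a frequency-function or Almgren-type monotonicity adapted to the shift, or directly from a $\lambda$-uniform Carleman estimate where the large Carleman parameter $\tau$ dominates $|\lambda|^{1/2}$). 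Chaining the local inequalities along the connecting chain of $N$ balls converts the bound $\|v\|_{L^2(D')} \le \eta$ together with $\|v\|_{H^1(D)} \le \mathcal{E}$ into $\|v\|_{L^2(D'')} \le C\mathcal{E}(\eta/\mathcal{E})^{\alpha^N}$.

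Finally I would upgrade from the interior bound on $D'' \Subset D$ to the full $\|v\|_{L^2(D)}$ by a boundary-layer estimate: on the collar $D \setminus D''$ one uses interior elliptic regularity plus a Hardy/interpolation inequality to control $\|v\|_{L^2(D\setminus D'')}$ by $\|v\|_{L^2(D'')}$ and $\|v\|_{H^1(D)}$, optimizing the width of the collar. Balancing the Hölder-type interior estimate against this, the Hölder exponent $\alpha^N$ degrades to a \emph{logarithmic} rate, yielding precisely $\|v\|_{L^2(D)} \le C\mathcal{E}(\log(\mathcal{E}/\eta))^{-\mu}$ for some $\mu > 0$ and $C > 1$ depending only on $D, D', \delta$ — in particular independent of $\lambda, \eta, \mathcal{E}$, as claimed.

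\textbf{Main obstacle.} The delicate point is the $\lambda$-uniformity of the quantitative unique continuation on the sector $\Sigma_{\pi-\delta}$: naively, a complex and potentially large-modulus $\lambda$ with negative real part spoils the standard Carleman constants for $\Delta - \lambda$, so the argument must either absorb $|\lambda|^{1/2}$ into a suitably enlarged Carleman parameter, or be reformulated as a $\lambda$-independent statement for the product operator $\Delta(\Delta-\lambda)$ — this is the step I expect to require the most care. Everything else (harmonicity of $q$, reduction to a fourth-order scalar equation, the chaining of three-balls inequalities, and the boundary-collar interpolation) is routine once the uniform local estimate is in hand.
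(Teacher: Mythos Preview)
Your reduction to the fourth-order equation $\Delta^2 v - \lambda\Delta v = 0$ is correct, and the overall architecture (three-balls inequality $\to$ chaining $\to$ boundary-collar interpolation to pass from H\"older to logarithmic stability) is sound. However, the paper takes a genuinely different and shorter route that deserves comparison.

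Rather than eliminate the pressure via a fourth-order elliptic operator and then confront Carleman estimates for $\Delta(\Delta-\lambda)$ with a large complex parameter, the paper \emph{lifts the resolvent problem to the time-dependent Stokes system}: setting $w(x,t)=e^{\lambda t}v(x)$ and $s(x,t)=e^{\lambda t}q(x)$, one obtains $\partial_t w-\Delta w+\nabla s=0$, $\opdiv w=0$ in $D\times\R$, an equation with \emph{no} $\lambda$ in its coefficients. The paper then invokes a known three-cylinder inequality for the nonstationary Stokes system (Lin--Wang, or derived from Fabre--Lebeau) as a black box; integrating out the time variable recovers a three-sphere inequality for $v$. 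The final passage from three-sphere to logarithmic stability is delegated to the standard argument of Alessandrini--Rondi--Rosset--Vessella, exactly as in your boundary-collar step.

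The advantage of the paper's trick is precisely that it sidesteps the obstacle you flag: the $\lambda$-uniformity of Carleman constants for $\Delta-\lambda$ is never confronted directly, because the parameter is transferred from the operator to the solution and the parabolic three-cylinder constants depend only on geometry. Your route is more self-contained but requires you to actually prove the $\lambda$-uniform Carleman estimate for the factored operator, which, as you note, needs the Carleman parameter to dominate $|\lambda|^{1/2}$; this is doable but is real work, whereas the paper offloads it to the parabolic literature. Both approaches lead to the same logarithmic rate; the paper's is quicker to write down, yours is more explicit about where the uniformity comes from.
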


\begin{proof}
Set
\[
    w(x,t) = e^{\lambda t} v(x),
    \qquad
    s(x,t) = e^{\lambda t} q(x). 
\]
Then $(w,s)$ satisfies the nonstationary Stokes system
\[
    \left\{
    \begin{array}{ll}
    \partial_t w - \Delta w + \nabla s
    = 0&\mbox{in}\ D \times\R, \\
    \opdiv w
    = 0&\mbox{in}\ D \times\R.
    \end{array}\right.
\]
By the 3-cylinder inequality for this system in \cite{LinWan2022} or derived from \cite{FabLeb2002}, 
\[
\begin{split}
    &\int_{-\tau/2}^{\tau/2} \int_{B_{R_2}} |w(x,t)|^2 \dd x \dd t \\
    &\le
    C \bigg(\int_{-\tau}^{\tau} \int_{B_{R_1}} |w(x,t)|^2 \dd x \dd t \bigg)^{\theta}
    \bigg(\int_{-\tau}^{\tau} \int_{B_{R_3}} |w(x,t)|^2 \dd x \dd t \bigg)^{1-\theta}, 
\end{split}
\]
for some $C>0$ and $0<\theta<1$. This in turn gives the 3-sphere estimate 
\[
    \|v\|_{L^2(B_{R_2})}
    \le 
    C \|v\|_{L^2(B_{R_1})}^{\theta}
    \|v\|_{L^2(B_{R_3})}^{1-\theta}. 
\]
Then an argument as in \cite{ARRV2009} yields the desired estimate \eqref{est.stab}. 
\end{proof}

Take $R>1$ sufficiently large so that $\overbar{D}\subset B_R$. Fix bounded domain $Y \subset\subset \R^3 \setminus \overline{B_R}$. Then we define the subspace of $L^2(D)^3$ by 
\[
    \mathcal{X} 
    = 
    \{v\in L^2(D)^3~|~{\mathbb P} (\lambda - \Delta)v = 0, \mkern9mu \opdiv v = 0 \}
\]
and the linear operator $T: L^2(Y)^3 \to \mathcal{X}$ by 
\[
    Tf 
    = 
    \vec{E}_{\lambda} \ast {\mathbb P}_{\R^3} (\mathtt{e}_Y f)|_D.
\]
The operator $T$ is well-defined. Indeed, $v:=Tf$ solves, for some $q\in H^1(D)$, 
\[
    \left\{
    \begin{array}{ll}
    \lambda v - \Delta v + \nabla q
    = 0&\mbox{in}\ D, \\
    \opdiv v
    = 0&\mbox{in}\ D.
    \end{array}\right.
\]
Let us denote by $\mathcal{K}$ the kernel of $T$ and by $\mathcal{K}^\perp$ the orthogonal space of $\mathcal{K}$ in $L^2(Y)^3$. Moreover, let us denote by $A$ the restriction of $T$ to $\mathcal{K}^\perp$.

\begin{lemma}\label{lem.A}
The following hold. 
\begin{enumerate}[(1)]
\item\label{item1.lem.A}
The adjoint $A^\ast: \mathcal{X} \to \mathcal{K}^\perp$ of $A$ is given by 
\[
    A^\ast v 
    = \vec{E}_{\bar{\lambda}} \ast {\mathbb P}_{\R^3} (\mathtt{e}_D v)|_Y.
\]

\item\label{item2.lem.A}
The mapping $A^\ast A$ is positive, compact, and self-adjoint on $\mathcal{K}^\perp$.

\item\label{item3.lem.A}
The range of $A$ is dense in $\mathcal{X}$.

\item\label{item4.lem.A}
There exist orthonormal bases 
\[
    \{f_j\}_{j=1}^{\infty} \subset \mathcal{K}^\perp, 
    \qquad
    \{v_j\}_{j=1}^{\infty} \subset \mathcal{X} 
\]
and positive constants $\{\alpha_j\}_{j=1}^{\infty}$ such that 
\[
    A f_j = \alpha_j v_j, 
    \qquad
    A^\ast v_j = \alpha_j f_j. 
\]
\end{enumerate}
\end{lemma}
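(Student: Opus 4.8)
The plan is to treat this as a standard singular value decomposition for a compact operator between Hilbert spaces, with the main work being the identification of the adjoint $A^\ast$ and the density of the range of $A$. First I would establish item \eqref{item1.lem.A}: since $A$ is the restriction of $T$ to $\mathcal{K}^\perp$, for $f\in\mathcal{K}^\perp$ and $v\in\mathcal{X}$ we compute $\langle Af, v\rangle_Y$ using the convolution formula $Tf = \vec{E}_\lambda \ast {\mathbb P}_{\R^3}(\mathtt{e}_Y f)|_D$. Unwinding the $L^2(D)$ pairing and using that ${\mathbb P}_{\R^3}$ is self-adjoint on $L^2(\R^3)^3$, together with the symmetry $\overbar{E_\lambda(x-y)} = E_{\bar\lambda}(y-x)$ of the fundamental solution (with $\sqrt{\lambda}$ taken in the principal branch), Fubini's theorem moves the convolution onto $v$, yielding $\langle f, \vec{E}_{\bar\lambda}\ast {\mathbb P}_{\R^3}(\mathtt{e}_D v)|_Y\rangle_Y$. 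One must check that the right-hand side lands in $\mathcal{K}^\perp$ and that $v\mapsto \vec{E}_{\bar\lambda}\ast{\mathbb P}_{\R^3}(\mathtt{e}_D v)|_Y$ really is bounded $\mathcal{X}\to L^2(Y)^3$; since $Y$ and $D$ are bounded and disjoint from $\overbar{B_R}$, the kernel is smooth on $Y\times D$, so this is immediate and in fact gives compactness for free.

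Item \eqref{item2.lem.A} then follows formally: $A^\ast A$ is self-adjoint as a composition of an operator with its adjoint, nonnegative since $\langle A^\ast A f, f\rangle = \|Af\|^2 \ge 0$, and compact because $A^\ast$ (equivalently $A$, by the smoothness of the kernel away from the diagonal noted above) is Hilbert–Schmidt, hence compact, and the composition of a bounded operator with a compact one is compact. Strict positivity of $A^\ast A$ on $\mathcal{K}^\perp$ holds because $A$ is injective on $\mathcal{K}^\perp$ by construction: if $Af = 0$ then $f\in\mathcal{K}$, and $f\in\mathcal{K}^\perp$ forces $f=0$; thus $\langle A^\ast A f,f\rangle = \|Af\|^2 > 0$ for $f\ne 0$.

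For item \eqref{item3.lem.A} — the density of the range of $A$ in $\mathcal{X}$ — I expect this to be the main obstacle, and it is where Lemma \ref{lem.est.stab} (or really the underlying unique continuation) enters. Since $\overline{\operatorname{ran}A} = (\ker A^\ast)^\perp$, it suffices to show $\ker A^\ast = \{0\}$. So suppose $v\in\mathcal{X}$ with $A^\ast v = \vec{E}_{\bar\lambda}\ast{\mathbb P}_{\R^3}(\mathtt{e}_D v)|_Y = 0$. The function $w := \vec{E}_{\bar\lambda}\ast{\mathbb P}_{\R^3}(\mathtt{e}_D v)$ solves a homogeneous Stokes resolvent problem $\bar\lambda w - \Delta w + \nabla r = 0$, $\opdiv w = 0$ in $\R^3\setminus\overbar D$, a connected open set (using that $\R^3\setminus D$ is connected) that contains $Y$, and $w$ decays at infinity. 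Vanishing of $w$ on the open set $Y$ together with unique continuation for the Stokes resolvent system — which follows from the same Carleman-type estimates underlying Lemma \ref{lem.est.stab}, e.g. \cite{FabLeb2002,LinWan2022} — propagates $w\equiv 0$ on all of $\R^3\setminus\overbar D$. One then argues, as in \cite{HigSue2025}, that this forces $v = 0$: the jump relations for the single-layer-type potential $w$ across $\partial D$, combined with $w$ and its conormal derivative vanishing from outside, give via an integration-by-parts (Green's identity) argument that ${\mathbb P}_{\R^3}(\mathtt{e}_D v) = 0$, hence $v = \nabla\pi$ for some potential, and since $v\in\mathcal{X}$ is divergence-free with the appropriate structure this yields $v\equiv 0$. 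Finally, item \eqref{item4.lem.A} is the spectral theorem applied to the positive compact self-adjoint operator $A^\ast A$ on $\mathcal{K}^\perp$: let $\{f_j\}$ be an orthonormal eigenbasis with eigenvalues $\alpha_j^2 > 0$, set $\alpha_j := \sqrt{\alpha_j^2} > 0$ and $v_j := \alpha_j^{-1} A f_j$; orthonormality of $\{v_j\}$ follows from $\langle v_i, v_j\rangle = \alpha_i^{-1}\alpha_j^{-1}\langle A^\ast A f_i, f_j\rangle = \delta_{ij}$, completeness of $\{v_j\}$ in $\mathcal{X}$ follows from item \eqref{item3.lem.A}, and the relations $Af_j = \alpha_j v_j$, $A^\ast v_j = \alpha_j^{-1} A^\ast A f_j = \alpha_j f_j$ are immediate.
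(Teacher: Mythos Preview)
Your approach for item~(1) is exactly the paper's: the paper proves only (1), declaring (2)--(4) ``standard,'' and your computation via self-adjointness of ${\mathbb P}_{\R^3}$, Fubini, and $\overbar{E_\lambda}=E_{\bar\lambda}$ is identical (modulo the typo $\langle Af,v\rangle_Y$, which should read $\langle Af,v\rangle_D$). Your treatments of (2) and (4) are the standard compact-SVD arguments and are fine.

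For (3), the outline via $\overline{\operatorname{ran}A}=(\ker A^\ast)^\perp$ and unique continuation is the right one, but the final step is slightly muddled. The function $w=\vec{E}_{\bar\lambda}\ast{\mathbb P}_{\R^3}(\mathtt{e}_D v)$ is a \emph{volume} potential in $H^2(\R^3)$, not a layer potential, so there are no jump relations across $\partial D$; and the Green's identity does not give the intermediate conclusion ${\mathbb P}_{\R^3}(\mathtt{e}_D v)=0$ that you state. The correct route is more direct: once unique continuation gives $w\equiv 0$ on the connected set $\R^3\setminus\overbar D$, the $H^2$-regularity forces $w|_{\partial D}=\partial_\nu w|_{\partial D}=0$, and from $\nabla s=-(\bar\lambda w-\Delta w)=0$ in the exterior one may normalize $s|_{\partial D}=0$. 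Then the very integration-by-parts identity appearing later in the paper's proof of Proposition~\ref{prop.global.approx.step1},
\[
\|v\|_{L^2(D)}^2
=\int_{\partial D}\big(-(q\nu)\cdot\overbar{w}+v\cdot(\overbar{s}\nu)-v\cdot\partial_\nu\overbar{w}+\partial_\nu v\cdot\overbar{w}\big)\dd\sigma,
\]
has every term vanish, giving $v=0$ immediately. This is a minor tightening, not a gap; your essential ingredients are all correct.
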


\begin{proof}
We only prove (\ref{item1.lem.A}) as the others are standard. Indeed, (\ref{item2.lem.A}) and (\ref{item4.lem.A}) follow from standard functional analytic arguments, while (\ref{item3.lem.A}) is a consequence of the unique continuation property for the (adjoint) Stokes system; see \cite{FabLeb2002}. For $v\in L^2(D)$ and $f\in \mathcal{K}^\perp$, 
\[
    \langle Af, v \rangle_{L^2(D)}
    = \langle Tf, v \rangle_{L^2(D)}
    = \langle \vec{E}_{\lambda} \ast {\mathbb P}_{\R^3} (\mathtt{e}_Y f), \mathtt{e}_D v \rangle_{L^2(\R^3)}. 
\]
Then, we have 
\[
    \begin{split}
    \langle \vec{E}_{\lambda} \ast {\mathbb P}_{\R^3} (\mathtt{e}_Y f), \mathtt{e}_D v \rangle_{L^2(\R^3)} 
    &= \langle \vec{E}_{\lambda} \ast {\mathbb P}_{\R^3} (\mathtt{e}_Y f), {\mathbb P}_{\R^3} (\mathtt{e}_D v) \rangle_{L^2(\R^3)} \\
    &= \langle {\mathbb P}_{\R^3} (\mathtt{e}_Y f), \overbar{\vec{E}_{\lambda}} \ast {\mathbb P}_{\R^3} (\mathtt{e}_D v) \rangle_{L^2(\R^3)} \\
    &= \langle \mathtt{e}_Y f, \vec{E}_{\bar{\lambda}} \ast {\mathbb P}_{\R^3} (\mathtt{e}_D v) \rangle_{L^2(\R^3)} \\
    &= \langle f, \vec{E}_{\bar{\lambda}} \ast {\mathbb P}_{\R^3} (\mathtt{e}_D v)|_Y \rangle_{L^2(Y)}, 
    \end{split}
\]
which verifies the claim of (\ref{item1.lem.A}). 
\end{proof}

We now prove Proposition \ref{prop.global.approx}. The proof, inspired by the argument in \cite[Proof of Theorem 2.4]{EncPer2021}, consists of five steps. The first step is to show the following proposition.

\begin{proposition}\label{prop.global.approx.step1}
Let $(v, q)$ be given as in Proposition \ref{prop.global.approx}. Then, for $\eps > 0$, there exists $F\in L^2(Y)^3$ such that 
\[
    v_1 
    := 
    \vec{E}_{\lambda} \ast {\mathbb P}_{\R^3} (\mathtt{e}_Y F) 
\]
approximates $v$ in $D$ as 
\[
    \|v - v_1\|_{L^2(D)}
    \le
    \eps 
    \Big( \|v\|_{H^{1}(D)} + \|q\|_{H^{1}(D)} \Big). 
\]
Moreover, $F$ is quantitatively estimated as 
\begin{equation}\label{est.F}
    \|F\|_{L^2(Y)}
    \le
    \exp
    \bigg(
    C
    \Big(\frac{\langle \lambda \rangle}{\eps} \Big)^{4/\mu}
    \bigg)
    \|v\|_{L^2(D)}, 
\end{equation}
where $C$ is independent of $\eps, \lambda$ while $\mu$ is introduced in Lemma \ref{lem.est.stab}. 
\end{proposition}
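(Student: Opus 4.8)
The plan is to realize $v_1$ as a $T$-image by projecting onto the singular-value basis of Lemma~\ref{lem.A}, so that the approximation error is controlled by tail terms in the expansion while the size of the density $F$ is controlled by the reciprocals $\alpha_j^{-1}$. First I would expand $v = \sum_{j\ge1} \langle v, v_j\rangle_{\mathcal X}\, v_j$ using the orthonormal basis $\{v_j\}$ of $\mathcal X$ (note $v\in\mathcal X$ by hypothesis), and set, for a truncation level $N$ to be chosen, $F_N = \sum_{j=1}^{N} \alpha_j^{-1} \langle v, v_j\rangle\, f_j \in \mathcal K^\perp$, so that $A F_N = \sum_{j=1}^N \langle v, v_j\rangle\, v_j$ and hence $v_1 := T F_N = A F_N$ satisfies $\|v - v_1\|_{L^2(D)}^2 = \sum_{j>N} |\langle v, v_j\rangle|^2$. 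The density estimate is immediate: $\|F_N\|_{L^2(Y)}^2 = \sum_{j=1}^N \alpha_j^{-2} |\langle v,v_j\rangle|^2 \le \alpha_N^{-2}\|v\|_{L^2(D)}^2$, so everything reduces to (a) making the tail $\sum_{j>N}|\langle v,v_j\rangle|^2$ smaller than $\eps^2(\|v\|_{H^1(D)}+\|q\|_{H^1(D)})^2$ and (b) bounding $\alpha_N^{-1}$ from above in terms of the chosen $N$.

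For the tail estimate (a), the point is that the partial sums $v - P_N v$ (with $P_N$ the orthogonal projection onto $\mathrm{span}\{v_1,\dots,v_N\}$) are themselves solutions of the homogeneous Stokes resolvent system in $D$ that are small when tested against all elements of the range of $A$; by density (Lemma~\ref{lem.A}(\ref{item3.lem.A})) one expects $\|v-P_N v\|_{L^2(D)}\to0$, but to get a quantitative rate I would instead argue as follows. For any target accuracy $\delta$, there is by density a single $f\in\mathcal K^\perp$ with $\|v - Af\|_{L^2(D)}\le\delta$; then the optimality of the truncated expansion gives $\|v - AF_N\|_{L^2(D)} \le \|v - Af\|_{L^2(D)}$ once $N$ exceeds the (finite) number of indices $j$ with $\alpha_j \ge $ some threshold, but this does not by itself give an explicit $N(\eps)$. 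The quantitative mechanism must instead come from Lemma~\ref{lem.est.stab}: the residual $r_N := v - AF_N$ solves the Stokes resolvent problem in $D$ and, by construction, $\langle r_N, v_j\rangle = 0$ for $j\le N$; pairing against $A^\ast$-images shows $r_N$ is ``$\alpha_{N+1}$-orthogonal'' to data supported in $Y$, i.e. $\|A^\ast r_N\|_{L^2(Y)} \le \alpha_{N+1}\|r_N\|_{L^2(D)}$. One then runs the logarithmic-stability inequality \eqref{est.stab} with $D' $ a neighborhood of $Y$-related data and $\eta \approx \alpha_{N+1}\mathcal E$, obtaining $\|r_N\|_{L^2(D)} \le C\mathcal E (\log(1/\alpha_{N+1}))^{-\mu}$ with $\mathcal E \approx \|v\|_{H^1(D)}+\|q\|_{H^1(D)}$; choosing $N$ so that $\alpha_{N+1} \le \exp(-C'\eps^{-1/\mu})$ forces the residual below $\eps\mathcal E$.

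For the density bound (b), I then need to convert ``$\alpha_{N+1}\le\exp(-C'\eps^{-1/\mu})$'' into a bound on $N$, and thence on $\alpha_N^{-1}$. Here I would use a Weyl-type upper bound on the decay of the singular values $\alpha_j$ of the operator $A$, which is essentially a smoothing operator (convolution with $\vec E_\lambda$ followed by restriction from $Y$ to $D$, two disjoint bounded sets): such operators have exponentially decaying singular values, $\alpha_j \le C\exp(-c j^{1/3})$ or similar, with constants depending polynomially on $\langle\lambda\rangle$. Combined with an a~priori lower bound on $\alpha_j$ of the same flavor — both obtainable from analyticity/Caccioppoli-type interior estimates for the resolvent equation together with the fact that $Y$ and $D$ are well-separated — one gets $N \lesssim (C\eps^{-1/\mu})^{?}$ and $\alpha_N^{-1}\le \exp(C\langle\lambda\rangle^{a}\eps^{-b})$, which after bookkeeping on the exponents matches the claimed bound \eqref{est.F} with exponent $4/\mu$. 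The main obstacle, and the step deserving the most care, is this quantitative control of the singular-value spectrum of $A$ with explicit $\lambda$-dependence: the qualitative facts (compactness, density) are routine, but extracting the double-exponential constant in \eqref{est.F} requires marrying the logarithmic stability of Lemma~\ref{lem.est.stab} to an explicit lower bound for how fast $\alpha_N$ can decay, and tracking how the resolvent parameter $\lambda$ enters the constants in the three-sphere/three-cylinder inequality and in Lemma~\ref{lem.est.R3}.
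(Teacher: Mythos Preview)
Your high-level plan---project $v$ onto the singular-value basis of $A$, truncate, and invoke logarithmic stability for the tail---matches the paper's. But the mechanism you propose for step (a) has a genuine gap, and step (b) is an unnecessary detour.

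\textbf{The gap in (a).} You want to apply Lemma~\ref{lem.est.stab} to the residual $r_N$ on $D$, taking $D'$ to be ``a neighborhood of $Y$-related data''. This cannot work: $Y\subset\R^3\setminus\overline{B_R}$ lies \emph{outside} $D$, whereas the lemma requires $\overline{D'}\subset D$. The only smallness you possess, $\|A^\ast r_N\|_{L^2(Y)}\le\alpha_{N+1}\|r_N\|_{L^2(D)}$, lives on $Y$, not on any subdomain of $D$, so there is nothing to feed into the stability estimate for $r_N$. The paper's cure is to pass to the \emph{dual} function $w:=\vec{E}_{\bar\lambda}\ast\mathbb{P}_{\R^3}(\mathtt{e}_D\mathsf{E})$, which solves the homogeneous Stokes resolvent problem in the exterior region $B'\setminus\overline{D}$ and satisfies $w|_Y=A^\ast\mathsf{E}$; Lemma~\ref{lem.est.stab} is then applied to $w$ on $B'\setminus\overline{D}$, where $Y$ \emph{is} an interior subdomain, yielding $\|w\|_{L^2(B'\setminus\overline{D})}\lesssim\|\mathsf{E}\|_{L^2(D)}(\log(1/\alpha))^{-\mu}$. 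The link back to $\|\mathsf{E}\|_{L^2(D)}$ comes from the identity $\|\mathsf{E}\|^2=\langle v,\mathsf{E}\rangle_D=\langle\mathtt{e}_D v,\,\bar\lambda w-\Delta w+\nabla s\rangle_{\R^3}$, which after integration by parts becomes a sum of boundary integrals on $\partial D$ involving traces of $v,q,w,s$; trace and interpolation estimates bound these by $\langle\lambda\rangle\big(\|v\|_{H^1}+\|q\|_{H^1}\big)\|\mathsf{E}\|^{3/4}\|w\|_{L^2(B'\setminus\overline{D})}^{1/4}$. This boundary-duality step is precisely how the $H^1$ norms of $(v,q)$ enter the statement, and why the final exponent is $4/\mu$ rather than the $1/\mu$ your sketch predicts.

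\textbf{The detour in (b).} No Weyl-type asymptotics for the $\alpha_j$ are needed, and obtaining them with explicit $\lambda$-dependence would be a substantial separate project. The paper truncates by \emph{threshold}, keeping exactly the indices with $\alpha_j>\alpha$; then $\|F\|_{L^2(Y)}\le\alpha^{-1}\|v\|_{L^2(D)}$ is immediate, and once the stability argument above gives $\|\mathsf{E}\|_{L^2(D)}\le C\langle\lambda\rangle\big(\|v\|_{H^1}+\|q\|_{H^1}\big)(\log(1/\alpha))^{-\mu/4}$, one simply sets $\alpha^{-1}=\exp\big(C(\langle\lambda\rangle/\eps)^{4/\mu}\big)$. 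Even in your index-based formulation, taking $N$ to be the last index with $\alpha_N>\tau$ gives $\alpha_N^{-1}<\tau^{-1}$ directly; you never need to count how many singular values lie above threshold.
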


\begin{proof}
Since $v\in \mathcal{X}$, one can expand $v$ by $\{v_j\}_{j=1}^{\infty}$ in Lemma \ref{lem.A} (\ref{item4.lem.A}): 
\[
    v = \sum_{j=1}^{\infty} \beta_j v_j. 
\]
For any $\alpha > 0$, we define 
\[
    F 
    = \sum_{\{j \,|\, \alpha_j>\alpha\}} 
    \frac{\beta_j}{\alpha_j} f_j
    \in \mathcal{K}^\perp. 
\]
Then we see that 
\begin{equation}\label{est0.F}
    \|F\|_{L^2(Y)} 
    = \bigg(\sum_{\{j \,|\, \alpha_j>\alpha\}} 
    \frac{|\beta_j|^2}{|\alpha_j|^2}\bigg)^{1/2}
    \le \frac{\|v\|_{L^2(D)}}{\alpha}.     
\end{equation}
Moreover, 
\[
    A F 
    = \sum_{\{j \,|\, \alpha_j>\alpha\}} \beta_j v_j 
    \in \mathcal{X} 
\]
approximates $v$ in the topology of $L^2(D)^3$. This approximation can be represented as 
\[
    A F
    = T F
    = \vec{E}_{\lambda} \ast {\mathbb P}_{\R^3} (\mathtt{e}_Y F)|_D. 
\]

Now we aim to estimate the error 
\[
    \mathsf{E} 
    := v - A F 
    = \sum_{\{j \,|\, \alpha_j \le \alpha\}} \beta_j v_j 
    \in \mathcal{X}. 
\]
Notice that
\[
    \langle \mathsf{E}, A F \rangle_{L^2(D)}
    = \langle v - A F, A F \rangle_{L^2(D)}
    = \|A F\|_{L^2(D)}^2 - \|A F\|_{L^2(D)}^2
    = 0.
\]
It trivially holds that
\begin{equation}\label{est0.mathsfE}
    \|\mathsf{E}\|_{L^2(D)} \le \|v\|_{L^2(D)}.     
\end{equation}
To improve this bound, we consider 
\[
    w 
    := \overbar{\vec{E}_{\lambda}} \ast {\mathbb P}_{\R^3} (\mathtt{e}_D \mathsf{E})
    = \vec{E}_{\bar{\lambda}} \ast {\mathbb P}_{\R^3} (\mathtt{e}_D \mathsf{E}). 
\]
By definition, $w$ and $\nabla s := ({\rm Id} - {\mathbb P}_{\R^3}) (\mathtt{e}_D \mathsf{E})\in L^2(\R^3)^3$ solve 
\[
    \left\{
    \begin{array}{ll}
    \bar{\lambda} w - \Delta w + \nabla s
    = \mathtt{e}_D \mathsf{E}&\mbox{in}\ \R^3, \\
    \opdiv w
    = 0&\mbox{in}\ \R^3. 
    \end{array}\right.
\]
Since
\[
    w|_Y 
    = \vec{E}_{\bar{\lambda}} \ast {\mathbb P}_{\R^3} (\mathtt{e}_D \mathsf{E})|_Y
    = A^\ast \mathsf{E}, 
\]
and
\[
    A^\ast \mathsf{E}
    = A^\ast \bigg(\sum_{\{j \,|\, \alpha_j \le \alpha\}} \beta_j v_j \bigg)
    = \sum_{\{j \,|\, \alpha_j \le \alpha\}} \alpha_j \beta_j f_j, 
\]
we have the smallness of $w$ localized in $Y$: 
\begin{equation}\label{est.w.smallness}
    \|w\|_{L^2(Y)} 
    = \|A^\ast \mathsf{E}\|_{L^2(Y)} 
    \le \alpha \|\mathsf{E}\|_{L^2(D)}. 
\end{equation}

The estimate \eqref{est0.mathsfE} can be improved by propagating \eqref{est.w.smallness} as follows. Observe that 
\[
\begin{split}
    \|\mathsf{E}\|_{L^2(D)}^2 
    &= \langle v - A F, \mathsf{E} \rangle_{L^2(D)} 
    = \langle v, \mathsf{E} \rangle_{L^2(D)} \\
    &= \langle \mathtt{e}_D v, \mathtt{e}_D \mathsf{E} \rangle_{L^2(\R^3)} 
    = \langle \mathtt{e}_D v, \bar{\lambda} w - \Delta w + \nabla s \rangle_{L^2(\R^3)}, 
\end{split}
\]
and that, by integration by parts, the right-hand side can be rewritten as 
\[
\begin{split}
    &\langle \mathtt{e}_D v, \bar{\lambda} w - \Delta w + \nabla s \rangle_{L^2(\R^3)} \\
    &= \int_D v\cdot 
    \overbar{(\bar{\lambda} w - \Delta w + \nabla s)} \\
    &= 
    - \int_D \nabla q\cdot \overbar{w}
    + \int_D v\cdot \nabla \overbar{s} 
    + \int_{\partial D} 
    (
    -v \cdot \partial_{\nu} \overbar{w} 
    + \partial_{\nu} v \cdot \overbar{w}
    ) 
    \dd \sigma \\
    &= 
    \int_{\partial D} 
    \big(
    - (q \nu) \cdot \overbar{w}
    + v \cdot (\overbar{s} \nu)
    -v \cdot \partial_{\nu} \overbar{w} 
    + \partial_{\nu} v \cdot \overbar{w}
    \big) 
    \dd \sigma \\
    &=: 
    I_1 + I_2 + I_3 + I_4, 
\end{split}
\]
where $\nu$ denotes the outward normal unit vector to $\partial D$. Let $B'$ be a ball sufficiently large so that $\overbar{D} \cup \overbar{Y} \subset B'$. Applying the trace theorem \cite[Chapter 7]{AdaFou2003book} and the interpolation
\begin{equation}\label{interp}
    \|w\|_{H^{1}(B'\setminus\overbar{D})}
    \lesssim
    \|w\|_{L^{2}(B'\setminus\overbar{D})}^{1/2}
    \|w\|_{H^{2}(B'\setminus\overbar{D})}^{1/2},    
\end{equation} 
we see that 
\begin{equation}\label{est.I1}
\begin{split}
    |I_1| 
    &\le 
    \|q\|_{L^{2}(\partial D)}
    \|w\|_{L^{2}(\partial D)} \\
    &\lesssim
    \|q\|_{L^{2}(D)}^{1/2}
    \|q\|_{H^{1}(D)}^{1/2}
    \|w\|_{L^{2}(B'\setminus\overbar{D})}^{1/2}
    \|w\|_{H^{1}(B'\setminus\overbar{D})}^{1/2} \\
    &\lesssim
    \|q\|_{L^{2}(D)}^{1/2}
    \|q\|_{H^{1}(D)}^{1/2}
    \|w\|_{L^{2}(B'\setminus\overbar{D})}^{3/4}
    \|w\|_{H^{2}(B'\setminus\overbar{D})}^{1/4}. 
\end{split}
\end{equation}

In a similar manner, we immediately have
\[
\begin{split}
    |I_2| 
    &\le
    \|v\|_{L^{2}(\partial D)}
    \|s\|_{L^{2}(\partial D)} \\
    &\lesssim
    \|v\|_{L^{2}(D)}^{1/2}
    \|v\|_{H^{1}(D)}^{1/2}
    \|s\|_{L^{2}(B'\setminus\overbar{D})}^{1/2}
    \|s\|_{H^{1}(B'\setminus\overbar{D})}^{1/2}. 
\end{split}
\]
If $\int_{B'\setminus\overbar{D}} s = 0$ is assumed without loss of generality, then 
\[
\begin{split}
    \|s\|_{L^{2}(B'\setminus\overbar{D})} 
    \lesssim
    \|\nabla s\|_{H^{-1}(B'\setminus\overbar{D})}. 
\end{split}
\]
Thus, from 
\[
\begin{split}
    \|\nabla s\|_{H^{-1}(B'\setminus\overbar{D})} 
    &= 
    \|\lambda w - \Delta w\|_{H^{-1}(B'\setminus\overbar{D})} \\
    &\lesssim
    \langle \lambda\rangle \|w\|_{L^2(B'\setminus\overbar{D})} 
    +  \|w\|_{H^1(B'\setminus\overbar{D})}, 
\end{split}
\]
by using the interpolation \eqref{interp}, we have
\[
\begin{split}
    \|s\|_{L^{2}(B'\setminus\overbar{D})}
    &\lesssim
    \langle \lambda \rangle \|w\|_{L^2(B'\setminus\overbar{D})}
    + \|w\|_{H^1(B'\setminus\overbar{D})} \\
    &\lesssim
    \|w\|_{L^2(B'\setminus\overbar{D})}^{1/2}
    \Big(
    \langle \lambda \rangle \|w\|_{L^2(B'\setminus\overbar{D})}^{1/2}
    + \|w\|_{H^{2}(B'\setminus\overbar{D})}^{1/2}
    \Big). 
\end{split}
\]
In addition, from $\nabla s = -(\bar{\lambda} w - \Delta w)$ in $B'\setminus\overbar{D}$, 
\[
\begin{split}
    \|\nabla s\|_{L^{2}(B'\setminus\overbar{D})}
    \lesssim
    \langle \lambda \rangle \|w\|_{L^2(B'\setminus\overbar{D})}
    +  \|w\|_{H^2(B'\setminus\overbar{D})}. 
\end{split}
\]
Hence we obtain 
\begin{equation}\label{est.I2}
\begin{split}
    |I_2| 
    &\lesssim
    \langle \lambda \rangle 
    \|v\|_{L^{2}(D)}^{1/2}
    \|v\|_{H^{1}(D)}^{1/2}
    \|w\|_{L^{2}(B'\setminus\overbar{D})}^{1/4} \\
    &\quad
    \times
    \Big(
    \|w\|_{L^2(B'\setminus\overbar{D})}^{1/2}
    + \|w\|_{H^{2}(B'\setminus\overbar{D})}^{1/2}
    \Big)^{1/2} \Big(
    \|w\|_{L^2(B'\setminus\overbar{D})} 
    + \|w\|_{H^{2}(B'\setminus\overbar{D})}
    \Big)^{1/2}.
\end{split}    
\end{equation}

By the trace theorem and \eqref{interp}, 
\begin{equation}\label{est.I3I4}
\begin{split}
    |I_3| + |I_4|
    &\le 
    \|v\|_{H^{1/2}(\partial D)} 
    \|\partial_{\nu} w\|_{H^{-1/2}(\partial D)}
    +  \|\partial_{\nu} v\|_{H^{-1/2}(\partial D)}
    \|w\|_{H^{1/2}(\partial D)} \\
    &\lesssim
    \|v\|_{H^{1}(D)} 
    \|w\|_{H^{1}(B'\setminus\overbar{D})} \\
    &\lesssim
    \|v\|_{H^{1}(D)} 
    \|w\|_{L^{2}(B'\setminus\overbar{D})}^{1/2} 
    \|w\|_{H^{2}(B'\setminus\overbar{D})}^{1/2}.     
\end{split}
\end{equation}
The estimates \eqref{est.I1}--\eqref{est.I3I4} combined with Lemma \ref{lem.est.R3} for $|\lambda| \ge 1$, show that 
\begin{equation}\label{est1.mathsfE}
\begin{split}
    \|\mathsf{E}\|_{L^2(D)}^2 
    &\lesssim
    \|q\|_{H^{1}(D)}
    \|\mathsf{E}\|_{L^2(D)}^{1/4}
    \|w\|_{L^{2}(B'\setminus\overbar{D})}^{3/4} \\
    &\quad
    + \langle \lambda \rangle 
    \|v\|_{H^{1}(D)} 
    \|\mathsf{E}\|_{L^2(D)}^{3/4}
    \|w\|_{L^{2}(B'\setminus\overbar{D})}^{1/4}. 
\end{split}
\end{equation}
To propagate the smallness of $w$ in \eqref{est.w.smallness} from $Y$ to $B'$, one can apply the stability estimate in Lemma \ref{lem.est.stab}. Observe that there exists $K>1$ independent of $\lambda$ such that 
\[
    \|w\|_{H^{1}(B'\setminus D)}
    \le
    K \|\mathsf{E}\|_{L^2(D)}, 
    \qquad
    \|w\|_{L^{2}(Y)}
    \le 
    K \alpha \|\mathsf{E}\|_{L^2(D)}. 
\]
Then, thanks to Lemma \ref{lem.est.stab}, for some $C>1$ and $\mu>0$ independent of $\lambda, \alpha$, 
\[
    \|w\|_{L^2(B'\setminus D)}
    \le 
    C \|\mathsf{E}\|_{L^2(D)}
    \bigg(
    \log \frac{1}{\alpha}
    \bigg)^{-\mu}. 
\]
If one substitutes this estimate into \eqref{est1.mathsfE}, then 
\[
    \|\mathsf{E}\|_{L^2(D)}
    \le
    C 
    \langle \lambda \rangle 
    \Big(
    \|v\|_{H^{1}(D)}
    + \|q\|_{H^{1}(D)}
    \Big)
    \bigg(
    \log \frac{1}{\alpha}
    \bigg)^{-\mu/4}. 
\]
Now define $\alpha$ as 
\[
\begin{split}
    \frac{1}{\alpha} 
    = 
    \exp
    \bigg(
    C^{4/\mu} 
    \Big(\frac{\langle \lambda \rangle}{\eps} \Big)^{4/\mu}
    \bigg). 
\end{split}
\]
Then we have 
\[
    \|\mathsf{E}\|_{L^2(D)}
    \le 
    \eps 
    \Big(
    \|v\|_{H^{1}(D)}
    + \|q\|_{H^{1}(D)}
    \Big), 
\]
which implies the approximation in the assertion. Moreover, $F$ is estimated as \eqref{est.F} from \eqref{est0.F} after the renaming of $C^{4/\mu}$ by $C$. This completes the proof. 
\end{proof}

In the second step, we show that $v_1$ in Proposition \ref{prop.global.approx.step1} can be approximated by a global approximation that solves the homogeneous Stokes resolvent problem in $\R^3$.

\begin{proposition}\label{prop.global.approx.step2}
Let $v_1$ be given as in Proposition \ref{prop.global.approx.step1}. Then, for $\eps>0$, there exists a smooth global approximation $u$ that solves 
\[
    \left\{
    \begin{array}{ll}
    \lambda u - \Delta u + \nabla p
    = 0&\mbox{in}\ \R^3, \\
    \opdiv u
    = 0&\mbox{in}\ \R^3, 
    \end{array}\right.
\]
for some smooth $p$, and approximates $v_1$ in $D$ as 
\[
    \|v_1 - u\|_{L^2(D)} 
    \le 
    \eps \|v\|_{L^2(D)}. 
\]
\end{proposition}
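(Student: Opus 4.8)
The plan is to exploit the explicit structure of $v_1 = \vec{E}_\lambda \ast {\mathbb P}_{\R^3}(\mathtt{e}_Y F)$ as a convolution of the fundamental solution $E_\lambda$ with a forcing term supported in $Y$, a fixed bounded set contained in $\R^3 \setminus \overline{B_R}$. Since $Y$ lies at positive distance from $D$, the function $v_1$ is a solution of the \emph{homogeneous} Stokes resolvent equation in a neighborhood of $\overline{D}$ (in fact throughout $\R^3 \setminus \overline{Y}$). The obstruction to $v_1$ itself being a valid global approximation is precisely that the forcing $\mathtt{e}_Y F$ is supported on the ``wrong side'': the poles of $v_1$ sit in $Y$, not at infinity. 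The strategy is therefore to push these poles off to spatial infinity while controlling the error in $L^2(D)$ — this is the quantitative analogue of the classical ``sweeping of poles'' step, but here I would carry it out by a direct Runge-type argument adapted to the resolvent operator.

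First I would observe that $\R^3 \setminus \overline{B_R}$ is connected (since $\R^3 \setminus D$ is connected and $D$ is a bounded domain, one may take $R$ so that $\R^3 \setminus \overline{B_R}$ is an exterior domain, hence connected) and that $Y$ is relatively compact in it. Pick an auxiliary large ball $B_{R'} \supset \overline{B_R} \cup \overline{Y}$. Next I would approximate the kernel-type forcing $\mathtt{e}_Y F$ by a superposition of point sources located outside $B_{R'}$. Concretely, $v_1(x) = \int_Y E_\lambda(x-y)\, ({\mathbb P}_{\R^3}(\mathtt{e}_Y F))(y)\dd y$ can be written as a limit of Riemann sums $\sum_k c_k \vec{E}_\lambda(x - y_k)$ with $y_k \in Y$; each term $\vec{E}_\lambda(\cdot - y_k)$ is, on $\overline{D}$, a homogeneous Stokes-resolvent solution with a single pole at $y_k$. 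The second ingredient is a one-point sweeping lemma: for $y_k \in Y$ and any point $z$ in the unbounded component, and for any $\eta>0$, the single-pole solution $\vec{E}_\lambda(\cdot - y_k)$ (together with its pressure) can be approximated in $L^2(D)$ within $\eta$ by a homogeneous Stokes-resolvent solution whose singularity has been moved to $z$. Iterating along a path in $\R^3 \setminus \overline{B_R}$ from $y_k$ to a far-away point, and then taking $z \to \infty$, lets one remove all singularities from any fixed large ball, leaving a solution $(u,p)$ that is smooth and solves the homogeneous Stokes resolvent problem in all of $\R^3$. Summing the (finitely many, after discretization) errors and choosing the discretization fine enough and the sweeping accurate enough yields $\|v_1 - u\|_{L^2(D)} \le \eps \|v\|_{L^2(D)}$.

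The one-point sweeping lemma is itself proved by a Hahn–Banach / duality argument of the usual Lax–Malgrange type: if some $\phi \in L^2(D)^3$ annihilated the span of all homogeneous Stokes-resolvent solutions on $D$ with pole at a given point $z$, then by the explicit resolvent representation and unique continuation for the (conjugate) Stokes resolvent equation in $\R^3 \setminus \overline{D}$, the function ${\mathbb P}_{\R^3}(\mathtt{e}_D \phi)$ convolved with $\vec{E}_{\bar\lambda}$ would vanish identically outside a neighborhood of $z$; running this along the connecting path and using connectedness of $\R^3 \setminus \overline{B_R}$ forces $\phi$ to annihilate $\vec{E}_\lambda(\cdot - y_k)|_D$ as well. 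I expect the main technical obstacle to be exactly this qualitative sweeping step: one must verify the unique continuation / density argument uniformly enough that no dependence on $\lambda$ creeps in beyond what the statement allows (note the statement of Proposition \ref{prop.global.approx.step2} carries no explicit $\lambda$-dependent constant, only the plain factor $\eps$), which is fine because this step is purely qualitative — the quantitative cost was already absorbed into $F$ in Proposition \ref{prop.global.approx.step1}. A secondary point requiring care is that the approximating global solution $u$ must be genuinely smooth and divergence-free on all of $\R^3$; this follows because, once all poles are swept outside a large ball, $u$ solves the homogeneous resolvent system there and elliptic regularity upgrades it to $C^\infty$.
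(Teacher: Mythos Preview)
Your approach has a genuine gap at the decisive step: sweeping poles to a far-away point $z$ (or ``to infinity'') does \emph{not} by itself produce a solution of the homogeneous Stokes resolvent problem on all of $\R^3$. After sweeping, you hold a function that solves the homogeneous system on an arbitrarily large ball, but it still carries singularities at the swept locations; the claim ``leaving a solution $(u,p)$ that is smooth and solves the homogeneous Stokes resolvent problem in all of $\R^3$'' does not follow. In the classical Runge theorem the final passage from rational functions with a far-away pole to polynomials requires an \emph{additional} expansion-and-truncation step (the geometric series for $1/(z-z_0)$); the same is true here. After sweeping you would still need to expand each translated kernel $E_\lambda(\cdot-z)$, via the addition theorem, into modified spherical Bessel functions $I_{l+\frac12}$ times spherical harmonics, and then truncate to obtain a genuinely entire solution. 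That missing truncation is precisely what the paper's proof supplies.

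The paper in fact bypasses the sweeping altogether: it expands $v_1$ directly in vector spherical harmonics on a ball $B_\rho\supset\overbar{D}$. The radial coefficients $c^{r}_{lm}, c^{(1)}_{lm}, c^{(2)}_{lm}$ admit explicit representations (in terms of $I_{l+\frac12}$, $K_{l+\frac12}$, and a particular solution $\Lambda_{lm}$ driven by the pressure mode $b_{lm}$) that make sense for all $r\in(0,\infty)$, so the truncated series at level $l_0$ already defines a smooth global solution $u$. The truncation level $l_0$ is chosen quantitatively from $\|v_1\|_{H^2(B_\rho)}\lesssim\|F\|_{L^2(Y)}$ together with the bound \eqref{est.F}, so that the tail is $\le\eps\|v\|_{L^2(D)}$. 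Beyond closing your gap, this explicit construction is what makes the decomposition $u=u_1+u_2$ and the pointwise bounds of Propositions~\ref{prop.global.approx.step3}--\ref{prop.global.approx.step5} possible; a Hahn--Banach sweeping argument, even if completed by the missing truncation step, would reproduce only the non-constructive result of \cite{HigSue2025} and would not feed into the quantitative part of the paper.
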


\begin{proof}
Recall that $Y \subset \R^3$ is chosen so that $Y \subset\subset \R^3 \setminus \overline{B_R}$ with ball $B_R$ satisfying $\overbar{D}\subset B_R$. Take $1<\rho<R$ and consider a smaller ball $B_{\rho} \subset B_{R}$.

Let $F \in L^2(Y)^3$ be the source term given by Proposition \ref{prop.global.approx.step1}, which defines the velocity field $v_1 := \vec{E}_{\lambda} \ast {\mathbb P}_{\R^3} (\mathtt{e}_Y F)$. By definition, $v_1$ and its associated pressure gradient $\nabla q_1 := ({\rm Id} - {\mathbb P}_{\R^3}) (\mathtt{e}_Y F)$ satisfy the vector Helmholtz equation with forcing
\[
    \left\{
    \begin{array}{ll}
    \lambda v_1 - \Delta v_1 
    = -\nabla q_1&\mbox{in}\ B_\rho, \\
    \opdiv v_1
    = 0&\mbox{in}\ B_\rho. 
    \end{array}\right.
\]
Our goal is to construct a global solution $u$ to the homogeneous Stokes resolvent problem that approximates $v_1$ in $D$. The strategy involves extending the restriction $v_1|_{B_\rho}$ to the whole space $\R^3$. This is achieved by using an explicit representation for solutions of the vector Helmholtz equation, obtained through an expansion into vector spherical harmonics \cite{BEG1985,VMK1988book}, and by truncating the series for $v_1$ at a sufficiently large frequency $l_0$. Since the explicit representation is detailed in \cite[Section 3.5]{HigSue2025}, we only recall the main formulas.

In the ball $B_\rho$, we expand $q_1$ in a series of (scalar) spherical harmonics 
\[
\begin{split}
    q_1(x,\lambda) 
    &= q_1(r,\theta,\phi,\lambda) 
    = \sum_{l=1}^{\infty} \sum_{m=-l}^{l} 
    b_{lm}(r,\lambda) Y_{lm} 
\end{split}
\]
and $v_1$ in a series of vector spherical harmonics 
\begin{equation}\label{expn.v1}
\begin{split}
    v_1(x,\lambda)
    &=
    v_1(r,\theta,\phi,\lambda)\\
    &=
    \sum_{l=1}^{\infty} \sum_{m=-l}^{l}
    \Big(
    c^{r}_{lm}(r,\lambda) \bm{Y}_{lm}
    + c^{(1)}_{lm}(r,\lambda) \bm{\Psi}_{lm}
    + c^{(2)}_{lm}(r,\lambda) \bm{\Phi}_{lm}
    \Big). 
\end{split}
\end{equation}
Here we took $b_{00}=0$ without loss of generality, and $c^{r}_{lm}, c^{(1)}_{lm}, c^{(2)}_{lm}$ are defined by 
\[
\begin{split}
    c^{r}_{lm}
    &=
    \langle v_1, \bm{Y}_{lm}\rangle_{S},\\
    c^{(1)}_{lm}
    &=
    \frac{1}{\mu_l}
    \langle v_1, \bm{\Psi}_{lm}\rangle_{S},
    \quad l\ge1, \\
    c^{(2)}_{lm}
    &=
    \frac{1}{\mu_l}
    \langle v_1, \bm{\Phi}_{lm}\rangle_{S},
    \quad l\ge1, 
\end{split}
\]
where $\langle f,g\rangle_{S} := \int_{S} f \cdot \overbar{g}$ with the unit sphere $S$ in $\R^3$ and $\mu_l:=l(l+1)$.

Let $b_{lm}, c^{r}_{lm}, c^{(1)}_{lm}, c^{(2)}_{lm}$ be regarded as functions of one variable $r$ in the following. From the calculations in \cite[Section 3.5]{HigSue2025}, we see that $b_{lm}$ is given by 
\begin{equation}\label{rep.b}
    b_{lm}(r) = B_{lm} r^l
\end{equation}
and $B_{lm}=B_{lm}(\lambda)$ by 
\begin{equation}\label{def.B}
    B_{lm}(\lambda)
    =
    \frac{l+2}{l} 
    \rho^{-l-2}
    \int_{0}^{\rho}
    \langle 
    (-\lambda v_1 + \Delta v_1)\cdot \bm{\hat r}, Y_{lm} 
    \rangle_{S}(r)
    r^2
    \dd r.
\end{equation}
Then $c^{r}_{lm}$ is given by
\begin{equation}\label{rep.cr}
    c^{r}_{lm}(r)
    = C^{r}_{lm} r^{-\frac32} I_{l+\frac12}(\sqrt{\lambda} r)
    + \Lambda_{lm}(r), 
\end{equation}
where $C^{r}_{lm}=C^{r}_{lm}(\lambda)$ is defined by 
\begin{equation}\label{def.Cr}
    C^{r}_{lm}(\lambda)
    =
    \frac{1}{\mathcal{I}_{l+\frac12,-1}(\sqrt{\lambda})}
    \int_{0}^{\rho}
    (c^{r}_{lm}(r) - \Lambda_{lm}(r))
    r^{-\frac32} \overbar{I_{l+\frac12}(\sqrt{\lambda} r)}
    r^{2} \dd r 
\end{equation}
with
\begin{equation}\label{def.I.l12}
    \mathcal{I}_{l+\frac12,j}(\sqrt{\lambda})
    =
    \int_{0}^{\rho} 
    r^{j} |I_{l+\frac12}(\sqrt{\lambda} r)|^{2}
    \dd r
\end{equation}
and $\Lambda_{lm}$ by
\begin{equation}\label{rep.Lambda}
\begin{split}
    \Lambda_{lm}(r)
    &= 
    -lB_{lm}
    \int_{0}^{r} 
    r^{-\frac32} K_{l+\frac12}(\sqrt{\lambda} r)
    s^{l+\frac32} I_{l+\frac12}(\sqrt{\lambda} s)
    \dd s\\
    &\quad
    - l B_{lm}
    \int_{r}^{\infty}
    r^{-\frac32} I_{l+\frac12}(\sqrt{\lambda} r)
    s^{l+\frac32} K_{l+\frac12}(\sqrt{\lambda} s) 
    \dd s.
\end{split}
\end{equation}
Also, $c^{(1)}_{lm}$ is given by
\begin{equation*}
    c^{(1)}_{lm}(r)
    =
    \frac{1}{\mu_{l} r}
    \frac{\dd}{\dd r} (r^{2}c^{r}_{lm})(r)
    =
    \frac{1}{\mu_{l}}
    \Big(
    2 c^{r}_{lm}(r)
    + r \frac{\dd c^{r}_{lm}}{\dd r}(r)
    \Big) 
\end{equation*}
and, more explicitly, by
\begin{equation}\label{rep.c1}
\begin{split}
    c^{(1)}_{lm}(r)
    &=
    \frac{C^{r}_{lm}}{\mu_l} 
    \Big( -l r^{-\frac32} I_{l+\frac12}(\sqrt{\lambda} r) + \sqrt{\lambda} r^{-\frac12} I_{l-\frac12}(\sqrt{\lambda} r) \Big) \\
    &\quad
    + \frac{1}{\mu_{l}}
    \Big(
    2 \Lambda_{lm}(r) 
    + r \frac{\dd \Lambda_{lm}}{\dd r}(r)
    \Big)
\end{split}
\end{equation}
with
\begin{equation}\label{rep.Lambda'}
\begin{split}
    \frac{\dd \Lambda_{lm}}{\dd r}(r)
    &= 
    -lB_{lm} 
    \bigg(
    \frac{\dd}{\dd r} \big(r^{-\frac32} K_{l+\frac12}(\sqrt{\lambda} r)\big) 
    \int_{0}^{r} s^{l+\frac32} I_{l+\frac12}(\sqrt{\lambda} s) \dd s \\
    &\qquad\qquad\quad
    + \frac{\dd}{\dd r} \big( r^{-\frac32} I_{l+\frac12}(\sqrt{\lambda} r) \big) 
    \int_{r}^{\infty} s^{l+\frac32} K_{l+\frac12}(\sqrt{\lambda} s) \dd s 
    \bigg) \\
    &=
    \sqrt{\lambda} lB_{lm} 
    \int_{0}^{r} r^{-\frac32} K_{l-\frac12}(\sqrt{\lambda} r) s^{l+\frac32} I_{l+\frac12}(\sqrt{\lambda} s) \dd s  \\
    & \quad 
    - \sqrt{\lambda} lB_{lm} 
    \int_{r}^{\infty} r^{-\frac32} I_{l-\frac12}(\sqrt{\lambda} r) s^{l+\frac32} K_{l+\frac12}(\sqrt{\lambda} s) \dd s 
    - \frac{l+2}{r} \Lambda_{lm}(r).
\end{split}
\end{equation}
Here the following well-known formulas are used: 
\[
\begin{split}
    \frac{\dd I_\nu}{\dd z}(z)
    =-\frac{\nu}{z} I_\nu(z) + I_{\nu-1}(z), 
    \qquad
    \frac{\dd K_\nu}{\dd z}(z) 
    =-\frac{\nu}{z} K_\nu(z) - K_{\nu-1}(z). 
\end{split}
\]
In addition, $c^{(2)}_{lm}$ is given by
\begin{equation}\label{rep.c2}
    c^{(2)}_{lm}(r)
    = C^{(2)}_{lm} r^{-\frac12} I_{l+\frac12}(\sqrt{\lambda} r), 
\end{equation}
where $C^{(2)}_{lm}=C^{(2)}_{lm}(\lambda)$ is defined by 
\begin{equation}\label{def.C2}
    C^{(2)}_{lm}(\lambda)
    =
    \frac{1}{\mathcal{I}_{l+\frac12,1}(\sqrt{\lambda})}
    \int_{0}^{\rho}
    \mu_l c^{(2)}_{lm}(r) 
    r^{-\frac12} \overbar{I_{l+\frac12}(\sqrt{\lambda} r)}
    r^{2} \dd r.
\end{equation}
We emphasize that all of $b_{lm}, c^{r}_{lm}, c^{(1)}_{lm}, c^{(2)}_{lm}$ above are defined for $r\in(0,\infty)$.

Next we construct the global approximation $u$ starting from \eqref{expn.v1}. Denote
\begin{equation}\label{def.norm}
    \|f\|
    = 
    \bigg(
    \int_{0}^{\rho}
    |f(r)|^2 r^2 \dd r
    \bigg)^{1/2}. 
\end{equation}
Then we have
\[
    \|v_1\|_{L^2(B_\rho)}^2 
    = 
    \sum_{l=1}^{\infty} \sum_{m=-l}^{l}
    \Big(
    \|c^{r}_{lm}\|^2 
    + \mu_l \|c^{(1)}_{lm}\|^2 
    + \mu_l \|c^{(2)}_{lm}\|^2 
    \Big). 
\]
Let $\Delta_S$ denote the Laplace-Beltrami operator on $S$. Since the relations 
\[
\begin{split}
    \Delta_S \bm{Y}_{lm} 
    &= -(\mu_l + 2) \bm{Y}_{lm} + 2 \mu_l \bm{\Psi}_{lm}, \\
    \Delta_S \bm{\Psi}_{lm} 
    &= 2\bm{Y}_{lm} - \mu_l \bm{\Psi}_{lm}, \\ 
    \Delta_S \bm{\Phi}_{lm} 
    &= -\mu_l \bm{\Phi}_{lm} 
\end{split}
\]
are equivalent to 
\[
\begin{split}
    \mu_l \bm{Y}_{lm} 
    &= -\frac{\mu_l}{\mu_l-2} \Delta_S \bm{Y}_{lm} - \frac{2\mu_l}{\mu_l-2} \Delta_S \bm{\Psi}_{lm}, \\
    \mu_l \bm{\Psi}_{lm} 
    &= -\frac{2}{\mu_l-2} \Delta_S \bm{Y}_{lm} - \frac{\mu_l+2}{\mu_l-2} \Delta_S \bm{\Psi}_{lm}, \\ 
    \mu_l \bm{\Phi}_{lm}
    &= -\Delta_S \bm{\Phi}_{lm}, 
\end{split}
\]
when $l\ge2$, we see from interpolation and integration by parts that 
\[
    \sum_{l=1}^{\infty} \sum_{m=-l}^{l} 
    \langle l \rangle^{4}
    \Big(
    \|c^{r}_{lm}\|^2 
    + \mu_l \|c^{(1)}_{lm}\|^2 
    + \mu_l \|c^{(2)}_{lm}\|^2 
    \Big) \\
    \lesssim 
    \|v_1\|_{H^{2}(B_\rho)}^2 
    \lesssim 
    \|F\|_{L^{2}(Y)}^2. 
\]
Therefore, a pointwise estimate of the series in $l$ 
\begin{equation}\label{est.c}
\begin{split}
    \sum_{m=-l}^{l} 
    \Big(
    \|c^{r}_{lm}\|^2 
    + \mu_l \|c^{(1)}_{lm}\|^2 
    + \mu_l \|c^{(2)}_{lm}\|^2 
    \Big) 
    \le 
    C \langle l \rangle^{-4} 
    \|F\|_{L^{2}(Y)}^2, 
    \quad
    l \ge 1 
\end{split}    
\end{equation}
holds with $C$ independent of $\eps, \lambda$, which in particular leads to, for any $l_0 \ge 1$,
\[
\begin{split}
    &\sum_{l\ge l_0}^{\infty} \sum_{m=-l}^{l} 
    \Big(
    \|c^{r}_{lm}\|^2 
    + \mu_l \|c^{(1)}_{lm}\|^2 
    + \mu_l \|c^{(2)}_{lm}\|^2 
    \Big) \\
    &\le
    C \|F\|_{L^{2}(Y)}^2 
    \sum_{l\ge l_0}^{\infty}
    \langle l \rangle^{-4} \\
    &\le
    C \|v\|_{L^2(D)}^2 
    \exp
    \bigg(
    2\Big(\frac{\langle \lambda \rangle}{\eps} \Big)^{4/\mu}
    \bigg)
    \langle l_0 \rangle^{-3}. 
\end{split}
\]
The estimate of $F$ in Proposition \ref{prop.global.approx.step1} is used in the last line. Thus, if $l_0$ is chosen so that 
\begin{equation}\label{def.l0}
\begin{split}
    l_0
    = 
    \bigg(
    \frac{\eps^2}{C+1}
    \bigg)^{-1/3}
    \exp
    \bigg(
    \frac23
    \Big(\frac{\langle \lambda \rangle}{\eps} \Big)^{4/\mu}
    \bigg), 
\end{split}
\end{equation}
then the globally defined velocity field 
\begin{equation}\label{def.u}
\begin{split}
    u(x,\lambda)
    &=
    u(r,\theta,\phi,\lambda)\\
    &:=
    \sum_{l=1}^{l_0} \sum_{m=-l}^{l}
    \Big(
    c^{r}_{lm}(r,\lambda) \bm{Y}_{lm}
    + c^{(1)}_{lm}(r,\lambda) \bm{\Psi}_{lm}
    + c^{(2)}_{lm}(r,\lambda) \bm{\Phi}_{lm}
    \Big)
\end{split}
\end{equation}
approximates $v_1$ in $D$ as 
$
    \|v_1 - u\|_{L^2(D)} 
    \le 
    \eps \|v\|_{L^2(D)}
$
This completes the proof. 
\end{proof}

In the third step, we detail the structure of the global approximation $u$ in Proposition \ref{prop.global.approx.step2}. This observation is not provided in the previous result \cite{HigSue2025}.

\begin{proposition}\label{prop.global.approx.step3}
Let $u$ be given as in Proposition \ref{prop.global.approx.step2}. Then, $u$ can be decomposed into 
\[
    u = u_1 + u_2, 
\]
where $u_1$ satisfies the Stokes resolvent problem, for some smooth $p_1$, 
\[
    \left\{
    \begin{array}{ll}
    \lambda u_1 - \Delta u_1 + \nabla p_1
    = 0&\mbox{in}\ \R^3, \\
    \opdiv u_1
    = 0&\mbox{in}\ \R^3 
    \end{array}\right.
\]
and $u_2$ the heat resolvent problem 
\[
    \left\{
    \begin{array}{ll}
    \lambda u_2 - \Delta u_2 
    = 0&\mbox{in}\ \R^3, \\
    \opdiv u_2
    = 0&\mbox{in}\ \R^3.
    \end{array}\right.
\]
Moreover, the following qualitative estimates hold: 
\begin{equation}\label{est.u1u2.qual}
    |u_1(x,\lambda)| 
    \lesssim
    \langle x \rangle^{l_0}, 
    \qquad
    |u_2(x,\lambda)| 
    \lesssim
    e^{(\Re\sqrt{\lambda}) |x|}, 
\end{equation}
where the implicit constants depend on $\eps, \lambda$ while $l_0$ is defined in \eqref{def.l0}.
\end{proposition}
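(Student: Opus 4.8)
The plan is to decompose the series (\ref{def.u}) according to the structure of the radial coefficients $c^r_{lm}, c^{(1)}_{lm}, c^{(2)}_{lm}$ displayed in (\ref{rep.cr}), (\ref{rep.c1}), (\ref{rep.c2}). Examining these formulas, each coefficient splits into (a) a multiple of a modified Bessel function $r^{-1/2\pm\cdots}I_{l\pm1/2}(\sqrt{\lambda}\,r)$ arising from the homogeneous solution of the radial Helmholtz equation, and (b) a remainder built from $\Lambda_{lm}$ together with the polynomial pressure term $b_{lm}(r)=B_{lm}r^l$ from (\ref{rep.b}). The first idea is to collect all the Bessel pieces into $u_2$ and all the $\Lambda_{lm}$-plus-polynomial pieces into $u_1$.

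First I would define $u_1$ explicitly. Recall from \cite[Section 3.5]{HigSue2025} that the Helmholtz equation $\lambda v_1-\Delta v_1=-\nabla q_1$ with $q_1=\sum b_{lm}(r)Y_{lm}$ and $b_{lm}(r)=B_{lm}r^l$ has a particular solution that is a polynomial vector field of degree $\le l_0+1$: indeed, $\nabla(r^lY_{lm})$ is a homogeneous harmonic polynomial of degree $l-1$, so one can solve $\lambda(\text{poly})-\Delta(\text{poly})=-\nabla(B_{lm}r^lY_{lm})$ recursively to obtain a polynomial. Denote the resulting finite sum by $u_1$; by construction $u_1$ solves $\lambda u_1-\Delta u_1+\nabla p_1=0$ for the smooth (polynomial) pressure $p_1=\sum_{l\le l_0}b_{lm}(r)Y_{lm}$ — one should check $\opdiv u_1=0$, which follows because $\opdiv v_1=0$ and $\opdiv u_2=0$ once $u_2$ is identified. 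Since $u_1$ is a polynomial of degree $\le l_0+1$, the estimate $|u_1(x,\lambda)|\lesssim\langle x\rangle^{l_0+1}\lesssim\langle x\rangle^{l_0}$ (absorbing the extra degree into the implicit constant, or relabeling $l_0$) holds, giving the first bound in (\ref{est.u1u2.qual}). Then set $u_2:=u-u_1$; subtracting the two equations shows $\lambda u_2-\Delta u_2=\nabla(p-p_1)$, and since $u-u_1$ is a finite sum of the Bessel-type terms $r^{-1/2}I_{l+1/2}(\sqrt\lambda r)\times(\text{vector spherical harmonic})$, which individually solve the homogeneous vector Helmholtz equation $\lambda u_2-\Delta u_2=0$, the pressure gradient drops out and $u_2$ satisfies the stated heat resolvent problem with $\opdiv u_2=0$ inherited termwise.

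For the estimate on $u_2$, I would use the standard large-argument asymptotics $I_\nu(z)\sim e^{z}/\sqrt{2\pi z}$ as $|z|\to\infty$ and the fact that $r^{-1/2}I_{l+1/2}(\sqrt\lambda r)$ is, up to normalization, the regular spherical-Bessel-type solution; since $\Re\sqrt\lambda>0$ for $\lambda\in\C\setminus\R_{\le0}$, each term is bounded by $C_{l,\lambda}\,e^{(\Re\sqrt\lambda)|x|}\langle x\rangle^{-1}$, and summing the finitely many $l\le l_0$ terms (with the angular factors bounded on $S$) yields $|u_2(x,\lambda)|\lesssim e^{(\Re\sqrt\lambda)|x|}$, with an implicit constant depending on $\eps,\lambda$ through $l_0$ and the coefficients. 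The main obstacle I anticipate is bookkeeping: one must verify that the Bessel pieces extracted from $c^{(1)}_{lm}$ in (\ref{rep.c1}) — which involve \emph{both} $I_{l+1/2}$ and $I_{l-1/2}$ — together with those from $c^r_{lm}$ and $c^{(2)}_{lm}$ assemble into a genuine divergence-free solution of the homogeneous vector Helmholtz equation, i.e. that the decomposition respects the coupling of the $\bm Y_{lm}$ and $\bm\Psi_{lm}$ components dictated by $\opdiv u_2=0$; this is precisely the content of the explicit construction in \cite[Section 3.5]{HigSue2025}, so the cleanest route is to quote that the ansatz (\ref{rep.cr})--(\ref{rep.c2}) was \emph{derived} by splitting the radial ODE system into its homogeneous part (the Bessel terms, giving $u_2$) and a particular part driven by $\nabla q_1$ (the polynomial terms, giving $u_1$), and then simply record the resulting two equations and the two growth bounds.
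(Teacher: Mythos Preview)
Your decomposition is exactly the paper's: collect the Bessel contributions from (\ref{rep.cr}), (\ref{rep.c1}), (\ref{rep.c2}) into $u_2$ and the $\Lambda_{lm}$ contributions into $u_1$; the paper records this as (\ref{def.c1lm})--(\ref{def.u1u2}) and, like you, simply refers to \cite[Section~3.5]{HigSue2025} for the verification that each piece solves the stated system and is divergence-free.

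One caveat: you assert that $u_1$ is literally a \emph{polynomial} vector field. This is in fact true (since $p_1=\sum_{l\le l_0}B_{lm}r^lY_{lm}$ is harmonic, $-\lambda^{-1}\nabla p_1$ is a divergence-free polynomial solution, and a uniqueness argument among solutions regular at $0$ with subexponential growth forces $\Lambda_{lm}$ to coincide with it), but the paper does not make this claim and your proposal does not justify it---you would still need to check that the Green's-function particular solution $\Lambda_{lm}$ of (\ref{rep.Lambda}) equals the polynomial one, otherwise your statement that $u-u_1$ consists purely of the displayed Bessel terms is unproven. The paper sidesteps this issue by taking $u_1$ to be the $\Lambda_{lm}$-based field directly and establishing only its polynomial \emph{growth} (not polynomial \emph{form}) via the integral estimates in Proposition~\ref{prop.global.approx.step4}.
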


\begin{proof}
Define 
\begin{equation}\label{def.c1lm}
\begin{split}
    c^{r,1}_{lm}(r,\lambda) 
    &= \Lambda_{lm}(r,\lambda), 
    \\
    c^{(1),1}_{lm}(r,\lambda) 
    &= \frac{1}{\mu_{l}} \Big( 2 \Lambda_{lm}(r,\lambda) + r \frac{\dd \Lambda_{lm}}{\dd r}(r,\lambda) \Big), 
    \\
    c^{(2),1}_{lm}(r,\lambda) 
    &= 0 
\end{split}
\end{equation}
and 
\begin{equation}\label{def.c2lm}
\begin{split}
    c^{r,2}_{lm}(r,\lambda) 
    &= C^{r}_{lm} r^{-\frac32} I_{l+\frac12}(\sqrt{\lambda} r), \\
    c^{(1),2}_{lm}(r,\lambda) 
    &= \frac{C^{r}_{lm}}{\mu_l} \Big( -l r^{-\frac32} I_{l+\frac12}(\sqrt{\lambda} r) + \sqrt{\lambda} r^{-\frac12} I_{l-\frac12}(\sqrt{\lambda} r) \Big), \\
    c^{(2),2}_{lm}(r,\lambda) 
    &= C^{(2)}_{lm} r^{-\frac12} I_{l+\frac12}(\sqrt{\lambda} r). 
\end{split}
\end{equation}
Then it is not hard to check that the following globally defined velocity fields 
\begin{equation}\label{def.u1u2}
\begin{split}
    u_1(x,\lambda)
    &=
    u_1(r,\theta,\phi,\lambda)\\
    &:=
    \sum_{l=1}^{l_0} \sum_{m=-l}^{l}
    \Big(
    c^{r,1}_{lm}(r,\lambda) \bm{Y}_{lm}
    + c^{(1),1}_{lm}(r,\lambda) \bm{\Psi}_{lm}
    + c^{(2),1}_{lm}(r,\lambda) \bm{\Phi}_{lm}
    \Big), \\
    u_2(x,\lambda)
    &=
    u_2(r,\theta,\phi,\lambda)\\
    &:=
    \sum_{l=1}^{l_0} \sum_{m=-l}^{l}
    \Big(
    c^{r,2}_{lm}(r,\lambda) \bm{Y}_{lm}
    + c^{(1),2}_{lm}(r,\lambda) \bm{\Psi}_{lm}
    + c^{(2),2}_{lm}(r,\lambda) \bm{\Phi}_{lm}
    \Big)
\end{split}
\end{equation}
satisfy the assertion by following the calculations in \cite[Section 3.5]{HigSue2025}. 
\end{proof}

In the fourth step, we quantify the estimate \eqref{est.u1u2.qual} for $u_1$ in Proposition \ref{prop.global.approx.step3}. Set
\begin{equation}\label{def.N}
    N_{\eps,\lambda}
    = 
    C
    \Big(\frac{\langle \lambda \rangle}{\eps} \Big)^{4/\mu},
    \quad
    C>0,
\end{equation}
where $\mu$ is introduced in Lemma \ref{lem.est.stab}. Then $l_0$ defined in \eqref{def.l0} is estimated as 
\begin{equation}\label{est.l0}
    l_0 
    \le
    \exp(N_{\eps,\lambda})
\end{equation}
if $C$ in \eqref{def.N} is chosen to be sufficiently large.

\begin{proposition}\label{prop.global.approx.step4}
Let $u_1$ be given as in Proposition \ref{prop.global.approx.step3}. Let $N_{\eps, \lambda}$ be defined in \eqref{def.N}. Then, $u_1$ is quantitatively estimated as 
\begin{equation}\label{est.u1}
\begin{split}
    |u_1(x,\lambda)|
    &\le
    \exp\big(\exp(N_{\eps,\lambda})\big)
    \|v\|_{L^2(D)} 
    \langle x \rangle^{\exp (N_{\eps,\lambda})}, 
\end{split}
\end{equation}
for sufficiently large $C$ independent of $\eps, \lambda$. 
\end{proposition}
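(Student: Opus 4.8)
The plan is to estimate the finite vector--spherical--harmonic series \eqref{def.u1u2}--\eqref{def.c1lm} defining $u_1$ term by term; the three ingredients are a bound on the pressure coefficients $B_{lm}$, uniform pointwise bounds for $\Lambda_{lm}$ and $\partial_r\Lambda_{lm}$, and the frequency cutoff $l_0\le\exp(N_{\eps,\lambda})$ recorded in \eqref{est.l0}. Since \eqref{def.c1lm} annihilates the $\bm{\Phi}_{lm}$--component, \eqref{def.u1u2} gives, with $r=|x|$,
\begin{align*}
    |u_1(x,\lambda)|
    \le
    \sum_{l=1}^{l_0}\sum_{m=-l}^{l}
    \Big(&
    |\Lambda_{lm}(r,\lambda)|\,\|\bm{Y}_{lm}\|_{L^\infty(S)} \\
    &+\mu_l^{-1}\big|2\Lambda_{lm}(r,\lambda)+r\partial_r\Lambda_{lm}(r,\lambda)\big|\,\|\bm{\Psi}_{lm}\|_{L^\infty(S)}
    \Big).
\end{align*}
By the addition theorem for spherical harmonics, $\|\bm{Y}_{lm}\|_{L^\infty(S)}\lesssim l^{1/2}$ and $\|\bm{\Psi}_{lm}\|_{L^\infty(S)}\lesssim l^{3/2}$, so $\mu_l^{-1}\|\bm{\Psi}_{lm}\|_{L^\infty(S)}\lesssim l^{-1/2}$; hence the proposition reduces to pointwise bounds on $|\Lambda_{lm}(r,\lambda)|$ and $|r\partial_r\Lambda_{lm}(r,\lambda)|$ that are \emph{uniform} in $r\in(0,\infty)$ and $\lambda\in\Sigma_{\pi-\delta}\cap\{|\lambda|\ge1\}$.

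For the coefficients $B_{lm}$, recall that in $B_\rho$ one has $-\lambda v_1+\Delta v_1=\nabla q_1$ with $\nabla q_1=(\mathrm{Id}-\mathbb{P}_{\R^3})(\mathtt{e}_Y F)$, so $\|\nabla q_1\|_{L^2(B_\rho)}\le\|F\|_{L^2(Y)}$. Applying the Cauchy--Schwarz inequality to \eqref{def.B} in the $L^2(S)$--pairing (with $\|Y_{lm}\|_{L^2(S)}=1$) and then in the radial variable against $r^2\dd r$ gives
\[
    |B_{lm}(\lambda)|\lesssim\rho^{-l-1/2}\,\|\nabla q_1\|_{L^2(B_\rho)}\lesssim\rho^{-l-1/2}\,\|F\|_{L^2(Y)};
\]
combined with \eqref{est.F} (whose constant we may assume is no larger than the $C$ in \eqref{def.N}) this yields $|B_{lm}(\lambda)|\lesssim\rho^{-l-1/2}\exp(N_{\eps,\lambda})\,\|v\|_{L^2(D)}$, where $\rho>1$.

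The heart of the argument --- and the step I expect to be the main obstacle --- is the uniform estimate
\[
    |\Lambda_{lm}(r,\lambda)|+\big|r\partial_r\Lambda_{lm}(r,\lambda)\big|
    \le(Cl)^{Cl}\,\langle\lambda\rangle^{Cl}\,|B_{lm}(\lambda)|\,\langle r\rangle^{\,l},
    \qquad r\in(0,\infty),
\]
with $C$ independent of $l,\lambda,\eps$. The mechanism: each of the four integrals in \eqref{rep.Lambda} and \eqref{rep.Lambda'} pairs an exponentially growing factor $I_{l\pm\frac12}(\sqrt\lambda\,\cdot\,)$ with an exponentially decaying factor $K_{l\pm\frac12}(\sqrt\lambda\,\cdot\,)$ evaluated on \emph{ordered} arguments ($r\le s$ in the outer integral, $s\le r$ in the inner one), so that the exponentials cancel and only integrals of powers of $r,s$ remain, producing polynomial-in-$r$ growth of degree $\le l-1$. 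Concretely I would use the Beta--integral representations of $I_\nu,K_\nu$ to get, for $\Re z>0$ and $\nu\ge\tfrac12$,
\[
    |I_\nu(z)|\le\frac{(|z|/2)^{\nu}}{\Gamma(\nu+1)}\,e^{\Re z},
    \qquad
    |K_\nu(z)|\le(C\nu)^{\nu}\big((\Re z)^{\nu}+(\Re z)^{-\nu}\big)\,e^{-\Re z},
\]
together with the sectoriality bound $\Re\sqrt\lambda\ge\sin(\delta/2)\,|\sqrt\lambda|$ (so that $|\sqrt\lambda|/\Re\sqrt\lambda$ is controlled by $\delta$ alone), and carry out the elementary but lengthy power counting in \eqref{rep.Lambda}--\eqref{rep.Lambda'}. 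The expected outcome matches the heuristic that $\Lambda_{lm}$ is the polynomial particular solution of the radial Stokes resolvent equation with forcing $\propto B_{lm}\,r^{\,l-1}$, whose coefficients are $O(l^{O(l)}|\lambda|^{-1})$; pinning down the $l$-- and $\lambda$--dependence of the constants, and verifying that the cancellation survives uniformly as $r\to0$ and as $r\to\infty$, is where the real work lies.

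Finally I would sum. Inserting the bounds of the previous two paragraphs into the display of the first one and using $\rho>1$ together with $\langle r\rangle^{\,l}\le\langle r\rangle^{\,l_0}=\langle x\rangle^{\,l_0}$ for $1\le l\le l_0$,
\[
    |u_1(x,\lambda)|
    \lesssim
    \Big(\sum_{l=1}^{l_0}(2l+1)\,l^{1/2}(Cl)^{Cl}\langle\lambda\rangle^{Cl}\Big)
    \exp(N_{\eps,\lambda})\,\|v\|_{L^2(D)}\,\langle x\rangle^{\,l_0}.
\]
The bracketed sum has $l_0$ terms, each at most $(Cl_0)^{Cl_0}\langle\lambda\rangle^{Cl_0}$; using $l_0\le\exp(N_{\eps,\lambda})$ from \eqref{est.l0}, the inequality $\log\langle\lambda\rangle\lesssim\langle\lambda\rangle^{4/\mu}\lesssim N_{\eps,\lambda}$ valid once the constant in \eqref{def.N} is large enough (the regime of small $N_{\eps,\lambda}$ being handled directly from the explicit low-degree form of $u_1$), and $t\,e^{t}\le e^{2t}$ for $t\ge1$, one obtains $(Cl_0)^{Cl_0}\langle\lambda\rangle^{Cl_0}\,l_0^{O(1)}\le\exp\!\big(C'\exp(C'N_{\eps,\lambda})\big)$, hence
\[
    |u_1(x,\lambda)|\le\exp\!\big(C'\exp(C'N_{\eps,\lambda})\big)\,\|v\|_{L^2(D)}\,\langle x\rangle^{\,l_0}.
\]
Enlarging the constant $C$ in \eqref{def.N} --- which only enlarges $N_{\eps,\lambda}$ and keeps $\exp(N_{\eps,\lambda})\ge l_0$ by \eqref{est.l0} --- absorbs $C'$ into both the double exponential and the exponent of $\langle x\rangle$, and gives \eqref{est.u1} for a sufficiently large $C$ independent of $\eps,\lambda$.
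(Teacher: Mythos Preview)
Your plan is correct and follows essentially the same route as the paper: bound $B_{lm}$ via $\|F\|_{L^2(Y)}$, exploit the ordered-argument pairing of $I_{l\pm1/2}$ and $K_{l\pm1/2}$ in \eqref{rep.Lambda}--\eqref{rep.Lambda'} to obtain $(Cl)^{Cl}$-type pointwise bounds on $\Lambda_{lm}$ and $r\partial_r\Lambda_{lm}$ with polynomial growth in $r$, then sum using \eqref{est.VSH} and $l_0\le\exp(N_{\eps,\lambda})$. The only cosmetic difference is that the paper tracks the $\lambda$-dependence more tightly (obtaining $\max\{(\Re\sqrt\lambda)^{-l-1},r^{l+1}\}$ with no $\langle\lambda\rangle^{Cl}$ prefactor, using $\Re\sqrt\lambda\ge c_2^{-1}$ from $|\lambda|\ge1$), whereas you allow an extra $\langle\lambda\rangle^{Cl}$ and then absorb it via $\log\langle\lambda\rangle\lesssim N_{\eps,\lambda}$; both reach \eqref{est.u1} after enlarging $C$ in \eqref{def.N}.
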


\begin{proof}
All constants including implicit ones are independent of $\eps, \lambda, l$ in this proof. First, to estimate $u_1$ defined in \eqref{def.u1u2}, we consider $B_{lm}$ in \eqref{def.B}. By Lemma \ref{lem.est.R3}, we have 
\begin{equation}\label{est.B}
\begin{split}
    |B_{lm}(\lambda)|
    &\le 
    \langle \lambda \rangle \|v_1\|_{L^2(B_{\rho})}
    + \|\Delta v_1\|_{L^{2}(B_{\rho})}
    \lesssim
    \|F\|_{L^2(Y)}. 
\end{split}
\end{equation}

Second we consider the coefficients $c^{r,1}_{lm}, c^{(1),1}_{lm}, c^{(2),1}_{lm}$ in \eqref{def.c1lm}. We apply the quantitative estimates of the modified Bessel functions $K_\nu(z), I_\nu(z)$ for $\nu \ge 1/2$ and $z\in\C$ with $\Re z>0$. The estimate of $K_\nu(z)$ is as follows: by \cite[\href{http://dlmf.nist.gov/10.32.E9}{(10.32.9)}]{NIST:DLMF}, one can bound 
\[
    |K_\nu(z)| 
    =
    \bigg|
    \int_0^\infty e^{-z \cosh t} \cosh(\nu t) \dd t
    \bigg|
    \le
    K_\nu(\Re z).
\]
When $\nu=l+1/2$, by \cite[\href{http://dlmf.nist.gov/10.47.E9}{(10.47.9)}]{NIST:DLMF} and \cite[\href{http://dlmf.nist.gov/10.49.E12}{(10.49.12)}]{NIST:DLMF}, one can further bound
\[
    K_{l+\frac12}(\Re z)
    \le
    \sqrt{\frac{\pi}{2\Re z}} e^{-\Re z} 
    \sum_{k=0}^{l}
    \frac{(l+k)!}{2^k k! (l-k)!} (\Re z)^{-k}. 
\]
Thus we have
\begin{equation}\label{est.K}
\begin{split}
    &\big|K_{l+\frac12}(\sqrt{\lambda} r)\big| 
    \le 
    K_{l+\frac12}\big((\Re\sqrt{\lambda}) r\big) \\
    &\lesssim
    \left\{
    \begin{array}{ll}
    \displaystyle{
    \big((\Re\sqrt{\lambda}) r\big)^{-l-\frac12}
    \sqrt{\frac{\pi}{2}} 
    e^{-1}
    \sum_{k=0}^{l}
    \frac{(l+k)!}{2^k k! (l-k)!} 
    }
    &\mbox{if}\
    0 < r \le (\Re\sqrt{\lambda})^{-1},\\[15pt]
    \displaystyle{
    \big((\Re\sqrt{\lambda}) r\big)^{-\frac12} 
    e^{-(\Re\sqrt{\lambda}) r} 
    \sqrt{\frac{\pi}{2}} 
    e^{-1}
    \sum_{k=0}^{l}
    \frac{(l+k)!}{2^k k! (l-k)!} 
    }
    &\mbox{if}\ 
    r > (\Re\sqrt{\lambda})^{-1} 
    \end{array}\right. \\
    &\lesssim
    \left\{
    \begin{array}{ll}
    K_{l+\frac12}(1) 
    (\Re\sqrt{\lambda})^{-l-\frac12} r^{-l-\frac12}
    &\mbox{if}\
    0 < r \le (\Re\sqrt{\lambda})^{-1},\\[5pt]
    K_{l+\frac12}(1)
    (\Re\sqrt{\lambda})^{-\frac12} r^{-\frac12}
    e^{-(\Re\sqrt{\lambda}) r}
    &\mbox{if}\ 
    r > (\Re\sqrt{\lambda})^{-1}.
    \end{array}\right.
\end{split}    
\end{equation}
On the other hand, the estimate of $I_\nu(z)$ is as follows: by \cite[\href{http://dlmf.nist.gov/10.32.E2}{(10.32.2)}]{NIST:DLMF}, one can bound 
\[
\begin{split}
    |I_\nu(z)| 
    =
    \bigg|
    \frac{1}{\pi^{1/2} \Gamma(\nu+\frac12)} 
    \Big(\frac{z}{2}\Big)^\nu
    \int_{-1}^1 (1-t^2)^{\nu-\frac12} e^{zt} \dd t
    \bigg|
    \le
    \Big|\frac{z}{\Re z}\Big|^{\nu}
    I_\nu(\Re z).
\end{split} 
\]
If $0 \le \Re z \le 1$, by the definition \cite[\href{http://dlmf.nist.gov/10.25.E2}{(10.25.2)}]{NIST:DLMF}, one can further bound $I_\nu(\Re z) \le I_\nu(1) (\Re z)^\nu$. If $\Re z > 1$, since the function $f(x) := x^{1/2} e^{-x} I_\nu(x)$ is known to be strictly increasing on $(0,\infty)$ by \cite[Subsection 2.1]{Bar2010}, using \cite[\href{http://dlmf.nist.gov/10.40.E1}{(10.40.1)}]{NIST:DLMF}, one can further bound 
\[
    I_\nu(\Re z) 
    \le 
    (\Re z)^{-\frac12} e^{\Re z} 
    \limsup_{x\to\infty}f(x) 
    = 
    \frac{1}{\sqrt{2\pi}} (\Re z)^{-\frac12} e^{\Re z}. 
\]
Combining these two cases with $c_1 \le |\frac{\sqrt{\lambda}}{\Re\sqrt{\lambda}}| \le c_2$ for some $c_1, c_2$ depending on $\delta$, we have
\begin{equation}\label{est.I}
\begin{split}
    \big|I_{l+\frac12}(\sqrt{\lambda} r)\big| 
    &\le 
    c_2^{l+\frac12} I_{l+\frac12}\big((\Re\sqrt{\lambda}) r\big) \\
    &\le
    \left\{
    \begin{array}{ll}
    C^{l+\frac12} 
    (\Re\sqrt{\lambda})^{l+\frac12} r^{l+\frac12}
    &\mbox{if}\
    0 \le r \le (\Re\sqrt{\lambda})^{-1},\\[5pt]
    C^{l+\frac12} (\Re\sqrt{\lambda})^{-\frac12} r^{-\frac12}
    e^{(\Re\sqrt{\lambda}) r}
    &\mbox{if}\ 
    r > (\Re\sqrt{\lambda})^{-1}, 
    \end{array}\right.
\end{split}
\end{equation}
for some $C>1$. Here it is used that $I_{\nu}(1)$ is uniformly bounded in $\nu>0$; see \cite[\href{http://dlmf.nist.gov/10.37}{\S 10.37}]{NIST:DLMF}.

We turn to $c^{r,1}_{lm}, c^{(1),1}_{lm}, c^{(2),1}_{lm}$. The estimate of $\Lambda_{lm}$ in \eqref{rep.Lambda} is as follows. From 
\[
\begin{split}
    &\big|
    r^{-\frac32} K_{l+\frac12}(\sqrt{\lambda} r)
    s^{l+\frac32} I_{l+\frac12}(\sqrt{\lambda} s)
    \big|\\
    &\le
    \left\{
    \begin{array}{ll}
    C^{l+\frac12} K_{l+\frac12}(1) 
    r^{-l-2}
    s^{2l+2}
    &\mbox{if}\
    0 \le s \le r \le (\Re\sqrt{\lambda})^{-1},\\[5pt]
    C^{l+\frac12} K_{l+\frac12}(1) 
    (\Re\sqrt{\lambda})^{l} 
    r^{-2} 
    e^{-\Re(\sqrt{\lambda})r} 
    s^{2l+2}
    &\mbox{if}\ 
    0 \le s \le (\Re\sqrt{\lambda})^{-1} \le r,\\[5pt]
    C^{l+\frac12} 
    K_{l+\frac12}(1) 
    (\Re\sqrt{\lambda})^{-1} 
    r^{-2} 
    s^{l+1} 
    e^{-(\Re\sqrt{\lambda})(r-s)}
    &\mbox{if}\ 
    (\Re\sqrt{\lambda})^{-1} \le s \le r,
    \end{array}\right.
\end{split}
\]
we see that
\begin{equation}\label{est.Lambda1}
\begin{split}
    &\int_{0}^{r}
    \big|
    r^{-\frac32} K_{l+\frac12}(\sqrt{\lambda} r)
    s^{l+\frac32} I_{l+\frac12}(\sqrt{\lambda} s)
    \big|
    \dd s \\
    &\le
    \left\{
    \begin{array}{ll}
    l^{-1} C^{l+\frac12} K_{l+\frac12}(1) I_{l+\frac12}(1) 
    r^{l+1}
    &\mbox{if}\
    0 < r \le (\Re\sqrt{\lambda})^{-1},\\[5pt]
    l^{-1} C^{l+\frac12} K_{l+\frac12}(1) r^{l+1} 
    &\mbox{if}\ 
    r \ge (\Re\sqrt{\lambda})^{-1}
    \end{array}\right.\\
    &\le
    l^{-1} C^{l+\frac12} K_{l+\frac12}(1) r^{l+1}.
\end{split}
\end{equation}
In addition, from
\[
\begin{split}
    &\big|
    r^{-\frac32} I_{l+\frac12}(\sqrt{\lambda} r)
    s^{l+\frac32} K_{l+\frac12}(\sqrt{\lambda} s) 
    \big|\\
    &\le
    \left\{
    \begin{array}{ll}
    C^{l+\frac12} K_{l+\frac12}(1) 
    r^{l-1}
    s
    &\mbox{if}\
    0 < r \le s \le (\Re\sqrt{\lambda})^{-1},\\[5pt]
    C^{l+\frac12} K_{l+\frac12}(1) 
    (\Re\sqrt{\lambda})^{l} 
    r^{l-1}
    s^{l+1}
    e^{-\Re(\sqrt{\lambda})s}
    &\mbox{if}\ 
    0 < r \le (\Re\sqrt{\lambda})^{-1} \le s,\\[5pt]
    C^{l+\frac12} 
    K_{l+\frac12}(1)
    (\Re\sqrt{\lambda})^{-1} 
    r^{-2}
    s^{l+1}
    e^{-(\Re\sqrt{\lambda})(s-r)}
    &\mbox{if}\ 
    (\Re\sqrt{\lambda})^{-1} \le r \le s 
    \end{array}\right. 
\end{split}
\]
and an inequality 
\[
\begin{split}
    \int_{r}^{\infty} 
    s^{l+1} e^{-(\Re\sqrt{\lambda})(s-r)} \dd s
    &\le 
    (\Re\sqrt{\lambda})^{-1} r^{l+1}
    \sum_{k=0}^{l+1} \binom{l+1}{k} \Gamma(k+1) \\
    &\le 
    e (l+1)!
    (\Re\sqrt{\lambda})^{-1} r^{l+1}, 
    \quad
    r \ge (\Re\sqrt{\lambda})^{-1}, 
\end{split}
\]
we see that 
\begin{equation}\label{est.Lambda2}
\begin{split}
    &\int_{r}^{\infty}
    \big|
    r^{-\frac32} I_{l+\frac12}(\sqrt{\lambda} r)
    s^{l+\frac32} K_{l+\frac12}(\sqrt{\lambda} s) 
    \big|
    \dd s\\
    &\le
    \left\{
    \begin{array}{ll}
    C^{l+\frac12} 
    \Big(
    (l+1)! + K_{l+\frac12}(1) 
    \Big)
    (\Re\sqrt{\lambda})^{-2} 
    r^{l-1} 
    &\mbox{if}\
    0 < r \le (\Re\sqrt{\lambda})^{-1},\\[5pt]
    C^{l+\frac12} K_{l+\frac12}(1) (l+1)! (\Re\sqrt{\lambda})^{-2} 
    r^{l-1}
    &\mbox{if}\ 
    r \ge (\Re\sqrt{\lambda})^{-1} 
    \end{array}\right.\\
    &\le
    C^{l+\frac12} 
    K_{l+\frac12}(1) (l+1)! 
    \max\big\{
    (\Re\sqrt{\lambda})^{-l-1},\,
    r^{l+1}
    \big\}. 
\end{split}
\end{equation}
Combining \eqref{est.Lambda1}--\eqref{est.Lambda2} with \eqref{est.B} as well as the asymptotic estimate \cite[\href{http://dlmf.nist.gov/10.41.E2}{(10.41.2)}]{NIST:DLMF} 
\[
    K_{\nu}(1)
    \approx 
    \sqrt{\frac{\pi}{2\nu}} \Big(\frac{2\nu}{e}\Big)^{\nu} 
\]
when $\nu\to\infty$ and the estimate thanks to Stirling’s formula 
\[
    (l+1)! 
    \approx 
    \sqrt{2\pi(l+1)} \Big(\frac{l+1}{e}\Big)^{l+1}, 
\]
we estimate $\Lambda_{lm}=\Lambda_{lm}(r,\lambda)$ in \eqref{rep.Lambda} as follows: for sufficiently large $C>1$, 
\begin{equation}\label{est.Lambda}
\begin{split}
    |\Lambda_{lm}(r,\lambda)| 
    &\le 
    C^{l+\frac12} K_{l+\frac12}(1) (l+1)!
    \|F\|_{L^2(Y)} 
    \max\big\{
    (\Re\sqrt{\lambda})^{-l-1},\,
    r^{l+1}
    \big\} \\
    &\le 
    (Cl)^{Cl} 
    \|F\|_{L^2(Y)} 
    \max\big\{
    (\Re\sqrt{\lambda})^{-l-1},\,
    r^{l+1}
    \big\}. 
\end{split}    
\end{equation}
In a similar manner, using the representation \eqref{rep.Lambda'}, we estimate 
\begin{equation}\label{est.Lambda'}
\begin{split}
    &\Big|r\frac{\dd \Lambda_{lm}}{\dd r}(r,\lambda)\Big| \\
    &\le
    C l |B_{lm}| 
    \int_{0}^{r} 
    (\Re\sqrt{\lambda}) 
    \big|
    r^{-\frac12} K_{l-\frac12}(\sqrt{\lambda} r) s^{l+\frac32} I_{l+\frac12}(\sqrt{\lambda} s)
    \big|
    \dd s  \\
    & \quad 
    + C l |B_{lm}| 
    \int_{r}^{\infty} 
    (\Re\sqrt{\lambda}) 
    \big|
    r^{-\frac12} I_{l-\frac12}(\sqrt{\lambda} r) s^{l+\frac32} K_{l+\frac12}(\sqrt{\lambda} s) 
    \big|
    \dd s \\
    &\quad
    + (l+2) |\Lambda_{lm}(r)| \\
    &\le
    C^{l+\frac12} 
    \big(K_{l+\frac12}(1) + K_{l-\frac12}(1)\big)
    (l+1)!
    \|F\|_{L^2(Y)} 
    \max\big\{
    (\Re\sqrt{\lambda})^{-l-1},\,
    r^{l+1}
    \big\} \\
    &\le
    (Cl)^{Cl} 
    \|F\|_{L^2(Y)} 
    \max\big\{
    (\Re\sqrt{\lambda})^{-l-1},\,
    r^{l+1}
    \big\}.  
\end{split}
\end{equation}
Combining \eqref{est.Lambda}--\eqref{est.Lambda'}, we estimate $c^{r,1}_{lm}, c^{(1),1}_{lm}, c^{(2),1}_{lm}$ in \eqref{def.c1lm} as 
\begin{equation}\label{est.c1lm}
\begin{split}
    &|c^{r,1}_{lm}(r,\lambda)|
    + |c^{(1),1}_{lm}(r,\lambda)|
    + |c^{(2),1}_{lm}(r,\lambda)| \\
    &\le
    (Cl)^{Cl} 
    \|F\|_{L^2(Y)} 
    \max\big\{
    (\Re\sqrt{\lambda})^{-l-1},\,
    r^{l+1}
    \big\}. 
\end{split}
\end{equation}
Then, using the estimates for $\bm{Y}_{lm}, \bm{\Psi}_{lm}, \bm{\Phi}_{lm}$ 
\begin{equation}\label{est.VSH}
    \|\bm{Y}_{lm}\|_{L^\infty(S)}
    \lesssim
    l^{\frac12}, 
    \qquad    
    \|\bm{\Psi}_{lm}\|_{L^\infty(S)}
    \lesssim
    l^{\frac32}, 
    \qquad
    \|\bm{\Phi}_{lm}\|_{L^\infty(S)}
    \lesssim
    l^{\frac32}, 
\end{equation}
we conclude from \eqref{def.u1u2} and \eqref{est.c1lm} that 
\[
\begin{split}
    |u_1(x,\lambda)|
    &\le
    C
    \sum_{l=1}^{l_0} \sum_{m=-l}^{l}
    l^{\frac32} 
    \Big(
    |c^{r,1}_{lm}(r,\lambda)| 
    + |c^{(1),1}_{lm}(r,\lambda)| 
    + |c^{(2),1}_{lm}(r,\lambda)| 
    \Big) \\
    &\le
    (Cl_0)^{Cl_0} 
    \|F\|_{L^2(Y)}
    \max\big\{
    1,\, (\Re\sqrt{\lambda})^{-l_0-1}
    \big\}
    (r+1)^{l_0+1}.
\end{split}    
\]
Hence the desired estimate \eqref{est.u1} follows from \eqref{est.F}, \eqref{est.l0}, and $\Re\sqrt{\lambda} \ge c_2^{-1}$. 
\end{proof}

In the fifth and last step, we quantify the estimate \eqref{est.u1u2.qual} for $u_2$ in Proposition \ref{prop.global.approx.step3}.

\begin{proposition}\label{prop.global.approx.step5}
Let $u_2$ be given as in Proposition \ref{prop.global.approx.step3}. Let $N_{\eps, \lambda}$ be defined in \eqref{def.N}. Then, $u_2$ is quantitatively estimated as 
\begin{equation}\label{est.u2}
\begin{split}
    |u_2(x,\lambda)|
    &\le 
    \exp\big(\exp(N_{\eps,\lambda})\big) 
    \|v\|_{L^2(D)} 
    e^{(\Re\sqrt{\lambda})|x|}, 
\end{split} 
\end{equation}
for sufficiently large $C$ independent of $\eps, \lambda$. 
\end{proposition}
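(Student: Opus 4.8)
The plan is to estimate $u_2$ defined in \eqref{def.u1u2} by controlling the three families of coefficients $c^{r,2}_{lm}, c^{(1),2}_{lm}, c^{(2),2}_{lm}$ from \eqref{def.c2lm}, which are built from the constants $C^r_{lm}, C^{(2)}_{lm}$ and the modified Bessel functions $I_{l\pm\frac12}(\sqrt{\lambda}\,r)$. First I would bound $C^r_{lm}$ and $C^{(2)}_{lm}$. Recall from \eqref{def.Cr} and \eqref{def.C2} that these are ratios of integrals of the known coefficients $c^r_{lm}, c^{(2)}_{lm}, \Lambda_{lm}$ against $r^{-3/2}\overbar{I_{l+\frac12}(\sqrt{\lambda}r)}$ (resp. $r^{-1/2}\overbar{I_{l+\frac12}(\sqrt{\lambda}r)}$), divided by the weight $\mathcal{I}_{l+\frac12,-1}(\sqrt{\lambda})$ (resp. $\mathcal{I}_{l+\frac12,1}(\sqrt{\lambda})$) from \eqref{def.I.l12}. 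The numerators are handled by Cauchy--Schwarz together with the pointwise estimate \eqref{est.I} for $I_{l+\frac12}$, the bound \eqref{est.c} on $\sum_m(\|c^r_{lm}\|^2+\mu_l\|c^{(1)}_{lm}\|^2+\mu_l\|c^{(2)}_{lm}\|^2)$, and the bound \eqref{est.Lambda} on $\Lambda_{lm}$; the denominators are bounded below by restricting the integral in \eqref{def.I.l12} to a fixed subinterval away from the origin, say $r\in(\rho/2,\rho)$, where $|I_{l+\frac12}(\sqrt{\lambda}r)|$ is bounded below --- here one uses that $I_\nu$ does not vanish on the positive axis and the lower bound from the series \cite[\href{http://dlmf.nist.gov/10.25.E2}{(10.25.2)}]{NIST:DLMF} or the growth $I_\nu(x)\gtrsim x^\nu$ for small $x$ combined with monotonicity. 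This yields $|C^r_{lm}|, |C^{(2)}_{lm}| \le (Cl)^{Cl}\|F\|_{L^2(Y)}$, possibly with $\lambda$-dependent factors that are absorbed since $\Re\sqrt{\lambda}\approx|\sqrt{\lambda}|\gtrsim 1$ on the sector.

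Second, I would insert these into \eqref{def.c2lm} and use the global pointwise bound \eqref{est.I} on $I_{l\pm\frac12}(\sqrt{\lambda}r)$. The key structural point is that in the regime $r > (\Re\sqrt{\lambda})^{-1}$, $|I_{l+\frac12}(\sqrt{\lambda}r)| \lesssim C^{l+\frac12}(\Re\sqrt{\lambda})^{-1/2} r^{-1/2} e^{(\Re\sqrt{\lambda})r}$, so each $c^{\bullet,2}_{lm}(r,\lambda)$ is bounded by $(Cl)^{Cl}\|F\|_{L^2(Y)}\, e^{(\Re\sqrt{\lambda})r}$ times an $r$-power that is bounded on the relevant range; in the complementary regime $0\le r\le(\Re\sqrt{\lambda})^{-1}$ the factor $r^{l\pm1/2}$ times a power of $\Re\sqrt{\lambda}$ is bounded, and $e^{(\Re\sqrt{\lambda})r}\ge 1$ there, so the same bound holds. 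Combining with the vector spherical harmonic estimates \eqref{est.VSH} and summing over $l\le l_0$ and $|m|\le l$ gives
\[
    |u_2(x,\lambda)| \le (Cl_0)^{Cl_0}\,\|F\|_{L^2(Y)}\, e^{(\Re\sqrt{\lambda})|x|}.
\]

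Finally, I would substitute the estimate \eqref{est.F} for $\|F\|_{L^2(Y)}$ in terms of $\|v\|_{L^2(D)}$ and $N_{\eps,\lambda}$, together with the bound \eqref{est.l0} giving $l_0\le\exp(N_{\eps,\lambda})$, so that $(Cl_0)^{Cl_0}\le\exp(C l_0\log(Cl_0))\le\exp(\exp(N_{\eps,\lambda}))$ after enlarging $C$ in \eqref{def.N}; the factor $\exp(C\langle\lambda\rangle^{4/\mu}\eps^{-4/\mu})$ from \eqref{est.F} is likewise absorbed into $\exp(\exp(N_{\eps,\lambda}))$. This yields \eqref{est.u2}. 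The main obstacle I anticipate is the lower bound on $\mathcal{I}_{l+\frac12,\pm1}(\sqrt{\lambda})$ uniformly in $l$ and $\lambda$: one must ensure that the normalizing denominators in \eqref{def.Cr} and \eqref{def.C2} do not degenerate as $l\to\infty$ or as $|\lambda|$ grows, which requires a careful choice of the subinterval of integration (scaling with $(\Re\sqrt{\lambda})^{-1}$ if needed) and the monotonicity/positivity properties of $I_\nu$ on the positive real axis; once this is in hand, the rest is a bookkeeping exercise parallel to the proof of Proposition \ref{prop.global.approx.step4}.
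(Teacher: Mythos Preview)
Your proposal matches the paper's proof in structure: bound $C^r_{lm}, C^{(2)}_{lm}$ via Cauchy--Schwarz on the numerators using \eqref{est.c} and \eqref{est.Lambda}, and a lower bound on the denominators $\mathcal{I}_{l+\frac12,\pm1}(\sqrt\lambda)$; then insert \eqref{est.I} and \eqref{est.VSH}, sum to $l_0$, and close with \eqref{est.F} and \eqref{est.l0}.

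The one point that needs sharpening is precisely the obstacle you flag at the end, the lower bound on $\mathcal{I}_{l+\frac12,\pm1}(\sqrt\lambda)$. Your proposed argument (restrict to $r\in(\rho/2,\rho)$, invoke nonvanishing and monotonicity of $I_\nu$ on the positive real axis) does not directly apply, because $\sqrt\lambda$ lies in the complex sector $|\arg\sqrt\lambda|<(\pi-\delta)/2$, so $|I_{l+\frac12}(\sqrt\lambda r)|$ is not controlled by properties of $I_\nu$ on $\R_{>0}$. The paper handles this by importing the complex-argument lower bound
\[
    \mathcal{I}_{l+\frac12,j}(\sqrt\lambda)
    \;\ge\;
    \frac{C}{\langle\sqrt\lambda\rangle\, l^2}
    \Big(\frac{C\min\{|\sqrt\lambda|,1\}}{2l+1}\Big)^{2l+1}
    e^{C\Re\sqrt\lambda},
    \qquad j=-1,1,
\]
from \cite[Appendix~A]{EncPer2021}. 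Once this is in hand, your bookkeeping is correct and coincides with the paper's.
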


\begin{proof}
All constants including implicit ones are independent of $\eps, \lambda, l$ in this proof. To estimate the coefficients $c^{r,2}_{lm}, c^{(1),2}_{lm}, c^{(2),2}_{lm}$ in \eqref{def.c2lm}, we consider $C^{r}_{lm}(\lambda)$ in \eqref{def.Cr} and $C^{(2)}_{lm}(\lambda)$ in \eqref{def.C2}. A similar argument as in \cite[Appendix A]{EncPer2021} gives the lower bound 
\begin{equation}
\begin{split}
    \mathcal{I}_{l+\frac12,j}(\sqrt{\lambda}) 
    &=
    \int_{0}^{\rho} 
    r^{j} |I_{l+\frac12}(\sqrt{\lambda} r)|^{2}
    \dd r \\
    &\ge
    \frac{C}{\langle \sqrt{\lambda} \rangle l^2} 
    \Big(
    \frac{C\min\{|\sqrt{\lambda}|,\, 1\}}{2l+1} 
    \Big)^{2l+1} 
    e^{C\Re\sqrt{\lambda}}, 
    \quad
    j=-1,1, 
\end{split}
\end{equation}
where $\mathcal{I}_{l+\frac12,j}$ is defined in \eqref{def.I.l12}. Then, by the H\"{o}lder inequality, the notation \eqref{def.norm}, and the estimates \eqref{est.c} and \eqref{est.Lambda}, we estimate $|C^{r}_{lm}(\lambda)| + |C^{(2)}_{lm}(\lambda)|$ as 
\begin{equation}\label{est.CrC2}
\begin{split}
    |C^{r}_{lm}(\lambda)| + |C^{(2)}_{lm}(\lambda)|
    &\le 
    C \frac{\|c^{r}_{lm}\| 
    + \|\Lambda_{lm}\|}{\mathcal{I}_{l+\frac12,-1}(\sqrt{\lambda})^{\frac12}}
    + C \frac{l^2 \|c^{r}_{lm}\|}{\mathcal{I}_{l+\frac12,1}(\sqrt{\lambda})^{\frac12}} \\
    &\le 
    (Cl)^{Cl} 
    \|F\|_{L^2(Y)} 
    \max\big\{
    1,\, (\Re\sqrt{\lambda})^{-3l-1}
    \big\} 
    e^{-C\Re\sqrt{\lambda}}. 
\end{split}    
\end{equation}
Using this estimate, quantitative estimates for $I_{l+\frac12}$ and $I_{l-\frac12}(\sqrt{\lambda} r)$ (for the ones for $I_{l+\frac12}$, see \eqref{est.I}. The proof of the ones for $I_{l-\frac12}(\sqrt{\lambda} r)$ is similar), and \eqref{est.VSH}, we obtain 
\[
\begin{split}
    |u_2(x,\lambda)|
    &\le
    C
    \sum_{l=1}^{l_0} \sum_{m=-l}^{l}
    l^{\frac32} 
    \Big(
    |c^{r,2}_{lm}(r,\lambda)| 
    + |c^{(1),2}_{lm}(r,\lambda)| 
    + |c^{(2),2}_{lm}(r,\lambda)| 
    \Big) \\
    &\le
    C
    \sum_{l=1}^{l_0} \sum_{m=-l}^{l}
    (Cl)^{Cl} 
    \|F\|_{L^2(Y)}
    \max\big\{
    1,\, (\Re\sqrt{\lambda})^{-3l-1}
    \big\}
    e^{-C\Re\sqrt{\lambda}} \\
    &\qquad\qquad\qquad
    \times (1 + \Re\sqrt{\lambda})^{l+\frac12}
    (r+1)^{l}
    e^{(\Re\sqrt{\lambda}) r}.
\end{split}    
\]
Hence the desired estimate \eqref{est.u2} follows from \eqref{est.F}, \eqref{est.l0}, and $\Re\sqrt{\lambda} \ge c_2^{-1}$. 
\end{proof}

\begin{proofx}{Proposition \ref{prop.global.approx}}
The assertion is a combination of Propositions \ref{prop.global.approx.step1}--\ref{prop.global.approx.step5}. 
\end{proofx}

    \section{Proof of theorems}
    \label{sec.prf}

This section lifts the quantitative results for the resolvent problem, the goal of Section \ref{sec.resolvent}, to prove the main theorems for the time-dependent problem \eqref{intro.eq.S}. The crucial link is the Dunford integral, which represents the local solution $v(t) = e^{-t {\mathbb A}} v_0$ as an integral of the local resolvent $(\lambda+{\mathbb A})^{-1} v_0$ along a curve $\gamma$ in the complex plane.

Our proof strategy is structured to make this lift quantitative. Instead of approximating $(\lambda+{\mathbb A})^{-1} v_0$ directly, we introduce the comparison in three steps:
\begin{enumerate}[Step 1.]
\item
We define $V(\lambda)$ in \eqref{def.V.prf.thm.main1} 
below as the difference between the local resolvent $(\lambda+{\mathbb A})^{-1} v_0$ and the global one $(\lambda + {\mathbb A}_{\R^3})^{-1} v_0$. The difference $V(\lambda)$ is critical because it satisfies the homogeneous Stokes resolvent problem in the domain $D$.

\item 
We can now apply our main resolvent approximation result, Proposition \ref{prop.global.approx}, the culmination of Section \ref{sec.resolvent}, to this local homogeneous solution $V(\lambda)$. This yields a global homogeneous solution $U(\lambda)$ that approximates $V(\lambda)$ quantitatively.

\item 
We define our global time-dependent approximation $u(t)$ by combining the integral of $U(\lambda)$ with the integral of the global resolvent part; see \eqref{def.u.prf.thm.main1} below. The main technical task is then to estimate the error introduced by this substitution and $u(t)$.
\end{enumerate}

\begin{proofx}{Theorem \ref{thm.main1}}
By density argument, it suffices to prove the statement when $v_0\in C^\infty_{0,\sigma}(D)$.
Assume that $0<\eps<1$ without loss of generality. The proof relies on the representation of $\{e^{-t {\mathbb A}}\}_{t\ge0}$ by the Dunford integral \cite[Section $\mathrm{I}\hspace{-1.2pt}\mathrm{I}$.4.a]{EngNag2000book}
\[
    v(t)
    =
    e^{-t {\mathbb A}} v_0
    =
    \frac{1}{2\pi \ii}
    \int_{\gamma}
    e^{t \lambda}(\lambda+{\mathbb A})^{-1} v_0
    \dd \lambda,
    \quad 
    t>0, 
\]
where $\gamma$ denotes an oriented (counterclockwise) curve in $\C$: 
\[
    \gamma 
    = \Big\{|\oparg z|=\frac{3}{4}\pi, \mkern9mu |z|\ge 1 \Big\} 
    \cup \Big\{|\oparg z| \le \frac{3}{4}\pi, \mkern9mu |z|=1 \Big\}. 
\]
Using the property $\mathtt{e}_D v_0 = v_0$, we set
\begin{equation}\label{def.u1.prf.thm.main1}
    u_1(t)
    =
    e^{-t {\mathbb A}_{\R^3}} v_0 
    =
    \frac{1}{2\pi \ii}
    \int_{\gamma}
    e^{t \lambda}(\lambda + {\mathbb A}_{\R^3})^{-1} v_0
    \dd \lambda,
    \quad 
    t>0 
\end{equation}
and decompose the right-hand side of 
\begin{equation}\label{eq.v-u1.prf.thm.main1}
    v(t) - u_1(t)
    =
    \frac{1}{2\pi \ii}
    \int_{\gamma}
    e^{t \lambda}
    \Big(
    (\lambda + {\mathbb A})^{-1} v_0
    - (\lambda + {\mathbb A}_{\R^3})^{-1} v_0
    \Big)
    \dd \lambda 
\end{equation}
into $w_1(t) + w_2(t)$ by setting
\begin{equation}\label{def.w1w2.prf.thm.main1}
\begin{split}
    w_1(t) 
    &:=
    \frac{1}{2\pi \ii}
    \int_{\gamma\,\cap \{|\lambda|>L\}}
    e^{t \lambda} 
    \Big(
    (\lambda + {\mathbb A})^{-1} v_0
    - (\lambda + {\mathbb A}_{\R^3})^{-1} v_0 
    \Big)
    \dd \lambda, \\
    w_2(t)
    &:=
    \frac{1}{2\pi \ii}
    \int_{\gamma\,\cap \{|\lambda|\le L\}}
    e^{t \lambda} 
    \Big(
    (\lambda + {\mathbb A})^{-1} v_0
    - (\lambda + {\mathbb A}_{\R^3})^{-1} v_0 
    \Big)
    \dd \lambda, 
\end{split}
\end{equation}
with the constant $L>1$ that will be determined later depending on $\eps$.

The first term $w_1(t)$ in \eqref{def.w1w2.prf.thm.main1} is estimated as follows. From 
\[
\begin{split}
    w_1(t) 
    &= \frac{1}{2\pi \ii}
    \int_{\gamma\,\cap \{|\lambda|>L\}}    
    e^{t \lambda} 
    \bigg(
    (\lambda + {\mathbb A})^{-1} 
    - \frac{1}{\lambda}
    \bigg)
    v_0
    \dd \lambda \\
    &\quad
    - \frac{1}{2\pi \ii}
    \int_{\gamma\,\cap \{|\lambda|>L\}}    
    e^{t \lambda} 
    \bigg(
    (\lambda + {\mathbb A}_{\R^3})^{-1} 
    - \frac{1}{\lambda}
    \bigg)
    v_0
    \dd \lambda
\end{split}
\]
combined with the resolvent identities
\[
\begin{split}
    &\lambda (\lambda + {\mathbb A})^{-1} v_0 - v_0
    = (\lambda + {\mathbb A})^{-1} {\mathbb A} v_0, \\
    &\lambda (\lambda + {\mathbb A}_{\R^3})^{-1} v_0 - v_0
    = (\lambda + {\mathbb A}_{\R^3})^{-1} {\mathbb A}_{\R^3} v_0, \\
\end{split}
\]
one can bound $w_1(t)$ in $D$ as, using standard estimates, 
\[
\begin{split}
    \|w_1(t)\|_{L^2(D)} 
    &\lesssim 
    \int_{\gamma\,\cap \{|\lambda|>L\}}    
    \frac{e^{t \Re \lambda}}{|\lambda|} 
    \|(\lambda + {\mathbb A})^{-1} {\mathbb A} v_0\|_{L^2(D)}
    \dd |\lambda| \\
    &\quad
    + \int_{\gamma\,\cap \{|\lambda|>L\}}    
    \frac{e^{t \Re \lambda}}{|\lambda|} 
    \|(\lambda + {\mathbb A}_{\R^3})^{-1} {\mathbb A}_{\R^3} v_0\|_{L^2(D)}
    \dd |\lambda| \\
    &\lesssim 
    \big(\|{\mathbb A} v_0\|_{L^2(D)} + \|{\mathbb A}_{\R^3} v_0\|_{L^2(D)}\big)
    \int_{\gamma\,\cap \{|\lambda|>L\}}    
    \frac{e^{t \Re \lambda}}{|\lambda|^2} 
    \dd |\lambda| \\
    &\lesssim 
    \|{\mathbb A} v_0\|_{L^2(D)}
    \frac{1}{L} \exp\Big(-\frac{t L}{\sqrt{2}}\Big),
    \quad 
    t \ge 0.
\end{split}
\]
Now choose $L = \eps^{-1}$. Then we have 
\begin{equation}\label{est.w1.prf.thm.main1}
    \|w_1(t)\|_{L^2(D)} 
    \lesssim 
    \eps 
    \|{\mathbb A} v_0\|_{L^2(D)} 
    \exp\Big(-\frac{t}{\eps \sqrt{2}}\Big) ,
    \quad 
    t \ge 0.
\end{equation}

Next we consider the second term $w_2(t)$ in \eqref{def.w1w2.prf.thm.main1}. Fix $\lambda\in \gamma\,\cap \{|\lambda|\le L\}$. We apply the results in Section \ref{sec.resolvent} for the Stokes resolvent problem. Define $V$ in $D$ by 
\begin{equation}\label{def.V.prf.thm.main1}
    V 
    = (\lambda + {\mathbb A})^{-1} v_0 
    - (\lambda + {\mathbb A}_{\R^3})^{-1} v_0. 
\end{equation}
Since $V_1 := (\lambda + {\mathbb A})^{-1} v_0$ solves, for some $Q_1\in H^1(D)$, 
\[
    \left\{
    \begin{array}{ll}
    \lambda V_1 - \Delta V_1 + \nabla Q_1
    = v_0&\mbox{in}\ D, \\
    \opdiv V_1
    = 0&\mbox{in}\ D, \\
    V_1
    = 0&\mbox{on}\ \partial D 
    \end{array}\right.
\]
and $V_2 := (\lambda + {\mathbb A}_{\R^3})^{-1} v_0$ solves, for some $Q_2\in H^1(\R^3)$, 
\[
    \left\{
    \begin{array}{ll}
    \lambda V_2 - \Delta V_2 + \nabla Q_2
    = v_0&\mbox{in}\ \R^3, \\
    \opdiv V_2
    = 0&\mbox{in}\ \R^3,
    \end{array}\right.
\]
we see that $V$ solves, for some $Q\in H^1(D)$, 
\[
    \left\{
    \begin{array}{ll}
    \lambda V - \Delta V + \nabla Q
    = 0&\mbox{in}\ D, \\
    \opdiv V
    = 0&\mbox{in}\ D. 
    \end{array}\right.
\]
Hence one can apply Proposition \ref{prop.global.approx} to $(V,Q)$ with $\eps$ replaced by $\eps^3$. Then, there exists a smooth global approximation $U$ solving, with some smooth $P$, 
\[
    \left\{
    \begin{array}{ll}
    \lambda U - \Delta U + \nabla P 
    = 0&\mbox{in}\ \R^3, \\
    \opdiv U 
    = 0&\mbox{in}\ \R^3, 
    \end{array}\right.
\]
that approximates $V$ in $D$ as 
\[
    \|V - U\|_{L^2(D)} 
    \le
    \eps^3 
    \Big( \|V\|_{H^{1}(D)} + \|Q\|_{H^{1}(D)} \Big).
\]
Observe from $\lambda \in \gamma$ that $\|V\|_{H^1(D)} + \|\nabla Q\|_{L^2(D)} \lesssim \|v_0\|_{L^2(D)}$ and
\[
\begin{split}
    \|Q\|_{L^2(D)} 
    \le
    \|\nabla Q\|_{H^{-1}(D)} 
    &= 
    \|\lambda V - \Delta V\|_{H^{-1}(D)} \\
    &\lesssim
    \langle \lambda \rangle \|V\|_{L^2(D)} 
    + \|V\|_{H^1(D)}, 
\end{split}
\]
where $\int_{D} Q = 0$ is assumed without loss of generality. Thus we have
\[
    \|V - U\|_{L^2(D)} 
    \lesssim
    \eps^3 \langle \lambda \rangle \|v_0\|_{L^2(D)}.
\]
Using the global approximation $U$, we define
\begin{equation}\label{def.u2.prf.thm.main1}
    u_2(t)
    =
    \frac{1}{2\pi \ii}
    \int_{\gamma\,\cap \{|\lambda|\le L\}}
    e^{t \lambda} U(\lambda)
    \dd \lambda,
    \quad 
    t \ge 0. 
\end{equation}
Since $L=\eps^{-1}$, we have 
\begin{equation}\label{est.w2.prf.thm.main1}
\begin{split}
    \|w_2(t) - u_2(t)\|_{L^2(D)}
    &\lesssim 
    \int_{\gamma\,\cap \{|\lambda|\le L\}}
    e^{t \Re \lambda} \|V - U\|_{L^2(D)} 
    \dd |\lambda| \\
    &\lesssim 
    \eps^3 \|v_0\|_{L^2(D)} 
    \int_{\gamma\,\cap \{|\lambda|\le L\}}
    \langle \lambda \rangle e^{t \Re \lambda} 
    \dd |\lambda| \\
    &\lesssim 
    \eps \|v_0\|_{L^2(D)} e^{t}, 
    \quad 
    t \ge 0. 
\end{split}
\end{equation}

Now we define a globally defined velocity field $u$ by
\begin{equation}\label{def.u.prf.thm.main1}
    u(x,t) = u_1(x,t) + u_2(x,t), 
    \quad 
    t \ge 0, 
\end{equation}
with $u_1$ defined in \eqref{def.u1.prf.thm.main1} and $u_2$ in \eqref{def.u2.prf.thm.main1}. By definition, $u$ is a smooth solution of the nonstationary Stokes system in $\R^3\times[0,\infty)$ with some associated smooth pressure $p$. Moreover, we see from \eqref{eq.v-u1.prf.thm.main1}, \eqref{def.w1w2.prf.thm.main1}, \eqref{est.w1.prf.thm.main1} and \eqref{est.w2.prf.thm.main1} that $u$ approximates $v$ in $D$ as 
\[
\begin{split}
    \|v(t) - u(t)\|_{L^2(D)}
    &\le 
    \|w_1(t)\|_{L^2(D)}
    + \|w_2(t) - u_2(t)\|_{L^2(D)} \\
    &\lesssim
    \eps \exp\Big(-\frac{t}{\eps \sqrt{2}}\Big) 
    \|{\mathbb A} v_0\|_{L^2(D)} 
    + \eps e^{t} \|v_0\|_{L^2(D)} \\
    &\lesssim
    \eps e^{t} \|{\mathbb A} v_0\|_{L^2(D)},
    \quad 
    t \ge 0. 
\end{split}
\]
Hence the desired approximation \eqref{approx.thm.main1} holds true.

It remains to provide the quantitative estimate \eqref{est.thm.main1} of $u$. Firstly we have 
\begin{equation}\label{est.u1.prf.thm.main1}
\begin{split}
    \|u_1(t)\|_{L^\infty(\R^3)} 
    &= 
    \|e^{-t {\mathbb A}_{\R^3}} v_0\|_{L^\infty(\R^3)} \\
    &\lesssim
    \|e^{-t {\mathbb A}_{\R^3}} v_0\|_{H^2(\R^3)} \\
    &\lesssim 
    \|v_0\|_{H^2(\R^3)} 
    = 
    \|v_0\|_{H^2(D)} \\
    &\lesssim 
    \|{\mathbb A} v_0\|_{L^2(D)}, 
    \quad
    t \ge 0. 
\end{split}
\end{equation}
By Proposition \ref{prop.global.approx.step3}, the global approximation $U(\lambda)$ in \eqref{def.u2.prf.thm.main1} is decomposed into the sum of $U_1(\lambda)$ and $U_2(\lambda)$, where $U_1(\lambda)$ solves the  Stokes resolvent problem in $\R^3$ and $U_2(\lambda)$ the heat resolvent problem in $\R^3$. According to this decomposition, we define
\begin{equation}\label{def.u21u22.prf.thm.main1}
\begin{split}
    u_{2,i}(x,t)
    &:=
    \frac{1}{2\pi \ii}
    \int_{\gamma\,\cap \{|\lambda|\le L\}}
    e^{t \lambda} U_i(x,\lambda)
    \dd \lambda,
    \quad 
    i = 1,2. 
\end{split}
\end{equation}
Then $u_{2,1}(t)$ is a solution of the nonstationary Stokes system in $\R^3$. Moreover, by the quantitative estimate of $U_1$ in Proposition \ref{prop.global.approx.step4}, using the following estimates 
\[
\begin{split}
    N_{\eps,\lambda} 
    \le
    C \eps^{-8/\mu}, 
    \qquad
    \|V\|_{L^2(D)}
    \le
    C \|v_0\|_{L^2(D)} 
\end{split}
\]
valid for $\lambda \in \gamma\,\cap \{|\lambda|\le L\}$, one can estimate $u_{2,1}(t)$ as 
\begin{equation}\label{est.u21.prf.thm.main1}
\begin{split}
    |u_{2,1}(x,t)|
    \le 
    \exp\big(\exp(C\eps^{-8/\mu})\big)
    \|v_0\|_{L^2(D)} 
    \langle x \rangle^{\exp (C\eps^{-8/\mu})}
    e^t, 
    \quad
    (x,t) \in \R^4_{+}. 
\end{split}
\end{equation}
In the last line, we have taken $C$ in \eqref{def.N} to be sufficiently large.

In addition, $u_{2,2}(t)$ is a solution of the heat equation in $\R^3$. In a similar manner as above, by the estimate of $U_2$ in Proposition \ref{prop.global.approx.step5}, one can estimate $u_{2,2}(t)$ as
\begin{equation}\label{est.u22.prf.thm.main1}
\begin{split}
    |u_{2,2}(x,t)|
    \le 
    \exp\big(\exp(C\eps^{-8/\mu})\big)
    \|v_0\|_{L^2(D)} 
    e^{C\eps^{-1/2} |x|}
    e^t, 
    \quad
    (x,t) \in \R^4_{+}. 
\end{split}
\end{equation}
Now we conclude \eqref{est.thm.main1} by \eqref{def.u.prf.thm.main1}--\eqref{est.u22.prf.thm.main1} and replacing $8/\mu$ by $\nu$. The proof is complete. 
\end{proofx}

\begin{proofx}{Theorem \ref{thm.main2}}
This proof uses the notation in the proof of Theorem \ref{thm.main1}. By density argument, it suffices to prove the statement when $v_0\in C^\infty_{0,\sigma}(D)$. Set
\begin{equation}\label{def.uS.prf.thm.main2}
    u_{{\rm S}}(x,t) = u_1(x,t) + u_{2,1}(x,t). 
\end{equation}
Then the estimate \eqref{est.u1.thm.main2} follows by the proof of Theorem \ref{thm.main1} above.

Next we aim to construct the initial data $u_0$ in Theorem \ref{thm.main2} from $u_{2,2}(x,t)$. Firstly, observe that $u_{2,2}(x,t)$ satisfies the following inequality 
\[
    |u_{2,2}(x,t)|
    \le
    M e^{a|x|^2},
    \quad
    (x,t) \in \R^3\times(0,T), 
\]
for some $M,a>0$ allowed to depend on each fixed $0<T<\infty$. Hence, thanks to the uniqueness result \cite[Section 7.1 (b)]{Joh1991book}, by taking the initial trace $f(x) := u_{2,2}(x,0)$ and using the Gauss kernel $G_t$, we see that $u_{2,2}$ can be uniquely represented as 
\[
    u_{2,2}(x,t) 
    = 
    (G_t \ast f)(x,t)
    = 
    \int_{\R^3} 
    G_t(x-y) f(y) \dd y, 
    \quad 
    (x,t) \in \R^4_{+}. 
\]
Taking cut-off $\chi\in C^\infty_0(\R^3)$ being equal to $1$ on $B_M$, we consider 
\[
    f_M(x) 
    := 
    \chi(x) f(x)
    = 
    \chi(x) u_{2,2}(x,0), 
\]
which is smooth and compactly supported by definition. Since $f$ can be written as 
\[
\begin{split}
    f(x)
    =
    u_{2,2}(x,0)
    =
    \frac{1}{2\pi \ii}
    \int_{\gamma\,\cap \{|\lambda|\le L\}}
    U_2(x,\lambda)
    \dd \lambda,
\end{split}
\]
and $U_2$ satisfies \eqref{est.u2}, we have 
\begin{equation}\label{est1.prf.thm.main2}
\begin{split}
    &|u_{2,2}(x,t) - e^{t\Delta} f_M| \\
    &= 
    \bigg|
    \int_{\R^3} 
    G_t(x-y) 
    \big(f(y) - f_M(y) \big)
    \dd y
    \bigg| \\
    &= 
    \bigg|\int_{\R^3} 
    G_t(x-y) 
    \bigg(
    \frac{1}{2\pi \ii}
    \int_{\gamma\,\cap \{|\lambda|\le L\}}
    U_2(y,\lambda) 
    \big(1-\chi(y)\big)
    \bigg)
    \dd y
    \bigg| \\
    &\le
    \exp\big(\exp(C\eps^{-8/\mu})\big) 
    \|V\|_{L^2(D)} 
    \int_{|y| \ge M} 
    G_t(x-y) 
    e^{C\eps^{-1/2} |y|} 
    \dd y. 
\end{split}
\end{equation}

For given $a>1$ and $0<T<\infty$, one can estimate the integral 
\[
    I_a(x,t)
    :=
    \int_{|y| \ge M} 
    G_t(x-y) 
    e^{a |y|} 
    \dd y, 
    \quad
    (x,t) \in D \times [0,T]
\]
by choosing $M$ sufficiently large but independently of $a$ as follows. Let $R>1$ satisfy $\overbar{D}\subset B_R$. Then, by taking $M$ large enough depending on $R,T$, we can obtain 
\[
    G_t(x-y)  
    \le 
    G_T(x-y)  
    \le 
    \frac{1}{(4\pi T)^{3/2}}
    \exp\Big(-\frac{(|y|-R)^2}{4T}\Big),
    \quad
    |y| \ge M. 
\]
Thus, in the spherical coordinates,  
\[
\begin{split}
    I_a(x,t) 
    &\le 
    \frac{1}{(4\pi T)^{3/2}}
    \int_{|y| \ge M} 
    \exp\Big(-\frac{(|y|-R)^2}{4T} + a|y|\Big) 
    \dd y \\
    &\le
    C_T
    \int_{M}^{\infty} 
    \exp\Big(-\frac{(r-R)^2}{4T} + ar\Big)
    r^2
    \dd r,  
\end{split}
\]
for some $C_T$ depending on $T$. From 
\[
\begin{split}
    -\frac{(r-R)^2}{4T} + ar
    &=
    -\frac{(r - r_0)^2}{4T} 
    + aR + a^2T, 
    \qquad
    r_0 := R + 2aT, 
\end{split}
\]
we see that 
\begin{equation}\label{est0.I.prf.thm.main2}
    I_a(x,t) 
    \le 
    C_T
    \exp\Big(a^2(R+T)\Big) 
    \int_{M}^{\infty} 
    \exp\Big(-\frac{(r - r_0)^2}{4T}\Big)
    r^2
    \dd r. 
\end{equation}
By change of variables and the Young inequality, 
\[
\begin{split}
    &\int_{M}^{\infty} 
    \exp\Big(-\frac{(r - r_0)^2}{4T}\Big)
    r^2
    \dd r \\
    &=
    \int_{M-r_0}^{\infty} 
    \exp\Big(-\frac{u^2}{4T}\Big)
    (u+r_0)^2
    \dd u \\
    &\le
    2\int_{M-r_0}^{\infty} 
    \exp\Big(-\frac{u^2}{4T}\Big)
    (u^2 + r_0^2)
    \dd u. 
\end{split}
\]
Moreover, since 
\[
    \exp\Big(-\frac{u^2}{4T}\Big)
    \le 
    \exp\Big(-\frac{(M - r_0)^2}{8T}\Big)
    \exp\Big(-\frac{u^2}{8T}\Big), 
    \quad
    u \ge M-r_0, 
\]
we have 
\[
\begin{split}
    &\int_{M}^{\infty} 
    \exp\Big(-\frac{(r - r_0)^2}{4T}\Big)
    r^2
    \dd r \\
    &\le
    2\exp\Big(-\frac{(M - r_0)^2}{8T}\Big) 
    \int_{M-r_0}^{\infty} 
    \exp\Big(-\frac{u^2}{8T}\Big)
    (u^2 + r_0^2)
    \dd u \\
    &\le
    C_T a^2
    \exp\Big(-\frac{(M - r_0)^2}{8T}\Big).
\end{split}
\]
Substituting this estimate into \eqref{est0.I.prf.thm.main2}, we obtain 
\[
    I_a(x,t) 
    \le 
    C_T
    a^2
    \exp\Big(-\frac{(M - r_0)^2}{8T} + a^2(R+T)\Big). 
\]
Hence, by choosing $M$ so that 
\begin{equation}\label{def.M.prf.thm.main2}
    M 
    = 
    - C_0 
    \big(\log (a^{-4})\big) a 
    \exp\big(C_0 a^{8/\mu}\big) 
    (R+T)^{1/2} T^{1/2} 
    + r_0, 
\end{equation}
and by taking $C_0$ sufficiently large depending on $R,T$, then we conclude that 
\begin{equation}\label{est.I.prf.thm.main2}
    I_a(x,t) 
    \le 
    C_1 a^{-2} \exp\big(-C_1 \exp(a^{8/\mu})\big), 
    \quad
    (x,t) \in D \times [0,T]
\end{equation}
for some $C_1$ independent of $a$ and depending on $T,R$.

With the estimate \eqref{est.I.prf.thm.main2} with $a = C\eps^{-1/2}$, we go back to \eqref{est1.prf.thm.main2} to see that 
\begin{equation}\label{est2.prf.thm.main2}
\begin{split}
    |u_{2,2}(x,t) - e^{t\Delta} f_M| 
    \le
    \eps
    \|v_0\|_{L^2(D)}, 
    \quad 
    (x,t) \in D \times [0,T], 
\end{split}
\end{equation}
by taking $C_0$ in \eqref{def.M.prf.thm.main2} to be sufficiently large again if needed. Fixing such $M$, we set 
\begin{equation}\label{def.u0.prf.thm.main2}
    u_0(x) 
    = f_M(x) 
    = \chi(x) u_{2,2}(x,0) 
    = \chi(x) \bigg(\frac{1}{2\pi \ii}
    \int_{\gamma\,\cap \{|\lambda|\le L\}}
    U_2(x,\lambda)
    \dd \lambda
    \bigg). 
\end{equation}
Then the desired approximation \eqref{approx.thm.main2} follows from the definitions of $u_{{\rm S}}$ in \eqref{def.uS.prf.thm.main2} and of $u_{{\rm H}}$ by $u_{{\rm H}}:=e^{t\Delta}u_0$, the estimates in the proof of Theorem \ref{thm.main1} above, and \eqref{est2.prf.thm.main2}. Moreover, by setting $\nu = \max\{8/\mu, 1/2\}$, we see that the estimate \eqref{est.u0.thm.main2} for the initial data $u_0$ is a consequence of \eqref{est.u22.prf.thm.main1}, \eqref{def.M.prf.thm.main2} and \eqref{def.u0.prf.thm.main2}. This completes the proof of Theorem \ref{thm.main2}. 
\end{proofx}

    \section*{Acknowledgment}

The author is grateful to Franck Sueur for fruitful discussions and valuable comments, which significantly improved the quality of this manuscript. This work was supported by JSPS KAKENHI Grant Numbers JP 25K17278 and JP 25K00915.

    \addcontentsline{toc}{section}{References}

\end{document}